\documentclass[12pt]{article}
\usepackage{amsmath,amsfonts,amssymb,amsthm}
\usepackage{bbm}
\usepackage{epsf,epsfig,psfrag,color}
\usepackage{verbatim}
\usepackage[ansinew]{inputenc}

\usepackage{booktabs}


\DeclareMathOperator{\dist}{dist}

\DeclareMathOperator{\diam}{diam}

\DeclareMathOperator{\co}{co}
\DeclareMathOperator{\Proj}{Proj}

\DeclareMathOperator{\argmax}{argmax}

\DeclareMathOperator{\wlim}{w-lim}

\hyphenation{Lip-schitz}

\begin{document}
\newtheorem{oberklasse}{OberKlasse}
\newtheorem{lemma}[oberklasse]{Lemma}
\newtheorem{proposition}[oberklasse]{Proposition}
\newtheorem{theorem}[oberklasse]{Theorem}
\newtheorem{remark}[oberklasse]{Remark}
\newtheorem{corollary}[oberklasse]{Corollary}
\newtheorem{definition}[oberklasse]{Definition}
\newtheorem{example}[oberklasse]{Example}
\newtheorem{observation}[oberklasse]{Observation}
\newcommand{\clconvhull}{\ensuremath{\overline{\co}}}
\newcommand{\R}{\ensuremath{\mathbbm{R}}}
\newcommand{\N}{\ensuremath{\mathbbm{N}}}
\newcommand{\Z}{\ensuremath{\mathbbm{Z}}}
\newcommand{\ClSets}{\ensuremath{\mathcal{A}}}
\newcommand{\CpSets}{\ensuremath{\mathcal{C}}}
\newcommand{\CoCpSets}{\ensuremath{\mathcal{CC}}}
\newcommand{\powerset}{\ensuremath{\mathcal{P}}}
\newcommand{\mc}{\mathcal}
\newcommand{\tc}{\textcolor}
\newcommand{\cL}{{\mathcal L}}

\renewcommand{\phi}{\ensuremath{\varphi}}
\renewcommand{\epsilon}{\ensuremath{\varepsilon}}

\title{On the solvability of relaxed one-sided Lipschitz inclusions in Hilbert spaces}
\author{Janosch Rieger and Tobias Weth}
\date{\today}
\maketitle

\begin{abstract}
We prove solvability theorems for relaxed one-sided Lipschitz multivalued mappings
in Hilbert spaces and for composed mappings in the Gelfand triple setting. 
From these theorems, we deduce properties of the inverses of such mappings and
convergence properties of a numerical scheme for the solution of algebraic inclusions.
\end{abstract}

{\bf Key words:} relaxed one-sided Lipschitz property, algebraic inclusion, root-finding method, set-valued analysis.

{\bf AMS(MOS) subject classifications:} 47H04, 65K10, 49J53.\\

\section{Introduction}

The relaxed one-sided Lipschitz property (see Definition \ref{definition:rosl} below)
was first considered in \cite{Donchev:96}, where it
was identified as an important stability criterion for time-dependent differential inclusions.
The behavior of general multivalued mappings with negative relaxed one-sided Lipschitz constants was later studied in \cite{Donchev:02,Donchev:04}. In particular, surjectivity of the mappings and therefore solvability of the corresponding algebraic inclusions was shown 
by considering the flow of the differential inclusions from \cite{Donchev:96}.
However, no information on the localization of the solutions was given in these papers.

For relaxed one-sided Lipschitz mappings in finite-dimensional spaces, the solvability theorem  
given in \cite{Beyn:Rieger:10} specifies a ball in which a solution of the inclusion is contained.
The radius of this ball depends on the norm of the residual of the inclusion at the center point.
This theorem guarantees that the implicit Euler scheme for stiff ordinary differential inclusions
is well-defined and convergent on the infinite time interval, and it has recently been applied in 
\cite{Mordukhovich:Tian:14} to obtain a numerical method for the solution of the generalized Bolza problem.
A refined solvability result presented in \cite{Beyn:Rieger:14} and restated as Theorem \ref{solvability:theorem} 
below immediately gives rise to a numerical algorithm for the solution of algebraic inclusions.

These solvability theorems are relevant for the following reason.
For non-scalar mappings, it is currently unclear whether continuous and relaxed one-sided Lipschitz multivalued mappings 
possess parameterizations that are continuous and one-sided Lipschitz with the same one-sided Lipschitz constant.
Moreover, simple examples 
show that selections generated by metric projection such as the minimal
selection are not one-sided Lipschitz with the same constant as the multimap.
It is therefore impossible to obtain precise solvability results by applying standard tools like
topological fixed point theorems to selections or parameterizations of one-sided Lipschitz multifunctions.

\medskip

In the present paper we generalize the finite-dimensional solvability result 
from \cite{Beyn:Rieger:14} to infinite-dimensional Hilbert spaces, and we discuss implications both in an abstract framework as well as in the 
context of  a special class of systems of elliptic differential inclusions.
After collecting definitions and preliminary tools in Section \ref{section:preliminaries},
we prove an abstract solvability result in an infinite-dimensional Hilbert space in Section \ref{section:solvability} via an approach based on Galerkin
approximations. The approach  avoids strong compactness assumptions which are not satisfied in many applications.
In Section \ref{sec:Gelfand}, we reformulate the main result in the context of Gelfand triples and composed multivalued operators.
In this setting, special care was taken to obtain optimal estimates by considering 
a mixed scalar product adapted to the properties of the individual operators.
As a byproduct, the main result reveals certain aspects of the behavior of the inverses of relaxed one-sided Lipschitz mappings
as detailed in Section \ref{section:inverses}.
As in the finite-dimensional context, the solvability theorem gives rise to a numerical algorithm
for the solution of relaxed one-sided Lipschitz algebraic inclusions, which is analyzed in Section \ref{section:solver}.
In Section~\ref{section:example}, we discuss a system of elliptic differential inclusions where
the assumptions of the Gelfand triple version of our main result are verified for suitable right-hand sides. Moreover, we test the numerical algorithm from Section~\ref{section:solver} in the context of this system.

\section{Preliminaries} \label{section:preliminaries}

In this section, we collect the necessary definitions and some elementary facts. Let $(X,\|\cdot\|_X)$ be any real normed vector space, and let $\langle \cdot,\cdot\rangle : X^* \times X \to \R$ denote 
the dual pairing.  

\begin{definition}
For $x\in X$ and nonempty subsets $M,M'\subset X$, we set
\begin{align*}
\dist_X(x,M') &:= \inf_{x'\in M'}\|x-x'\|_X,\\
e_X(M,M') &:= \sup_{x\in M}\dist_X(x,M'),\\
\|M\| &:= e_X(M,\{0_X\}),\\
\Proj_X(x,M) &:= \{x'\in M: \|x-x'\|_X = \dist_X(x,M)\},\\
B_X(M,R)&:=\{y\in X: \dist_X(y,M) \le R\},\\
B_X(x,R)&:= \{y\in V: \|y-x\|_X \le R\}.
\end{align*}
The nonempty closed, bounded and convex subsets of $X$ are denoted $\mc{CBC}(X)$,
and the nonempty convex and compact subsets of $X$ are denoted $\mc{CC}(X)$.
\end{definition}

\begin{definition}
  \begin{itemize}
  \item[a)]  The support function $\sigma_X^*: X\times\mc{CBC}(X^*)\rightarrow\R$ is defined by
$$
\sigma_X^*(x,A) := \sup_{\phi\in A}\langle\phi,x\rangle\quad \forall x\in X, A\subset X^*.
$$
\item[b)] If $X$ is a real Hilbert space with scalar product $(\cdot,\cdot)_X$, then we define 
$\sigma_X: X\times\mc{CBC}(X)\rightarrow\R$ by 
$$
\sigma_X(x,A) := \sup_{y \in A}(y,x) \quad \forall x\in X, A\subset X.
$$
\end{itemize}
\end{definition}

\begin{definition}
Let $(M,d)$ be a metric space  and $Y$ a further normed vector space.  
\begin{itemize}
\item [a)]  A set-valued mapping $F:M  \rightarrow\mc{CBC}(Y^*)$ is called upper hemicontinuous (uhc) 
at $x\in M$ if for any sequence $(x_k)_{k\in\N}\subset M $ with $x_k\rightarrow x$
we have
\begin{equation} \label{uhc:weak}
\limsup_{k\rightarrow\infty}\sigma_Y^*(v,F(x_k)) \le \sigma_Y^*(v,F(x)) \qquad \text{for all $v\in Y$.}
\end{equation}
It is called uhc if it is uhc at any $x\in M$.\\ 
If $M$ is weakly sequentially closed, then $F$ is called compactly upper hemicontinuous (c-uhc) 
if condition \eqref{uhc:weak} holds for any sequence $(x_k)_{k\in\N}\subset M $ with $x_k\rightharpoonup x\in M$.
\item [b)]  If $Y$ is a Hilbert space, then a set-valued mapping $F:M  \rightarrow\mc{CBC}(Y)$ is called upper hemicontinuous (uhc) 
at $x\in M$ if for any sequence $(x_k)_{k\in\N}\subset M $ with $x_k\rightarrow x$
we have
\begin{equation} \label{uhc:weak-1}
\limsup_{k\rightarrow\infty}\sigma_X(v,F(x_k)) \le \sigma_X(v,F(x)) \qquad \text{for all $v\in Y$.}
\end{equation}
It is called uhc if it is uhc at any $x\in M$.\\ 
If $M$ is weakly sequentially closed, then $F$ is called compactly upper hemicontinuous (c-uhc) 
if condition \eqref{uhc:weak-1} holds for any sequence $(x_k)_{k\in\N}\subset M $ with $x_k\rightharpoonup x\in M$.
\item[c)] A set-valued mapping $F: M \rightarrow\mc{CC}(Y)$ is called upper semicontinuous (usc) at $x\in M$ if for any sequence 
$(x_k)_{k\in\N}\subset M$ with $x_k\rightarrow x \in M$ we have
\begin{equation} \label{usc:condition}
e_V(F_N(x_k),F_N(x)) \rightarrow 0\ \text{as}\ k\rightarrow\infty.
\end{equation}
It is called usc if it is usc at any $x\in M$. 
\end{itemize}
\end{definition}

%
%

The following one-sided property is the central object of investigation in the present paper.
\begin{definition} \label{definition:rosl}
A mapping $F:X\rightrightarrows X^*$ is called $l$-relaxed one-sided Lipschitz with constant $l\in\R$
(or $l$-ROSL) if for any $x,x'\in X$ and $y\in F(x)$ there exists $y'\in F(x')$ such that
\[\langle y'-y,x'-x\rangle \le l\|x'-x\|_X^2.\]
\end{definition}
In Theorems \ref{solvability:hilbert}, \ref{solvability:theorem} and \ref{W:theorem}, the relaxed one-sided Lipschitz property 
will only be required relative to one point in the graph of $F$. In Sections \ref{section:inverses} and \ref{section:solver},
however, we will deal with mappings that are relaxed one-sided Lipschitz in the sense of Definition \ref{definition:rosl}.

\begin{remark} \label{monotone}
The definition of an ROSL mapping with constant $l \ge 0$ is formally similar to that of a monotone mapping.
Nevertheless, ROSL and monotone mappings have fundamentally different properties. Monotone mappings on Hilbert spaces are single-valued outside a set of first Baire category (see \cite{Kenderov:76}), 
and the operator $I+\alpha T$ is onto and possesses a single-valued inverse for any monotone $T$ and any $\alpha>0$,
which is the theoretical basis for the proximal point algorithm (see \cite{Rockafellar:76}). 
In contrast, the $1$-ROSL mapping $F:\R\rightarrow\mc{CC}(\R)$ given by $F(x)=x+[-1,+1]$
is set-valued on the whole space, and for all $\alpha>0$ the inverse of $I+\alpha F$, given by 
$(I+\alpha F)(x)=(1-\alpha)x+[-\alpha,\alpha]$, is set valued as well. 
Similarly, ROSL mappings with constants $l \le 0$, which are mainly considered in the present paper, look formally similar to mappings $T$ with $-T$ monotone but 
have fundamentally different properties. 
\end{remark}

We also recall the following facts which are well known and easy to see. 

\begin{lemma} \label{projections}
Let $X$ be a Hilbert space, and let $x\in X$.
\begin{itemize}
\item [a)] If $M\subset X$ is closed and convex, then $\Proj(x,M)$ is a single point.
\item [b)] If $M\subset X$ is weakly sequentially closed, then $\Proj(x,M)$ is nonempty.
\item [c)] If $M\subset X$ is closed, and if $(x_n)_n\subset X$
and $\bar x\in X$ satisfy $\|x_n-\bar x\|_X\rightarrow 0$ and $\dist_X(x_n,M)\rightarrow 0$
as $n\rightarrow\infty$, then $\bar x\in M$.
\end{itemize}
\end{lemma}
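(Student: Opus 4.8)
The plan is to handle the three items in turn, each by elementary Hilbert space arguments; the only point requiring a topological input beyond bookkeeping is part~(b).

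For (a), I fix $x\in X$, put $d:=\dist_X(x,M)$, and choose a minimizing sequence $(x_n)_n\subset M$ with $\|x-x_n\|_X\to d$. Applying the parallelogram identity to $x-x_n$ and $x-x_m$ yields
\[
\|x_n-x_m\|_X^2=2\|x-x_n\|_X^2+2\|x-x_m\|_X^2-4\,\Big\|x-\tfrac{x_n+x_m}{2}\Big\|_X^2 \le 2\|x-x_n\|_X^2+2\|x-x_m\|_X^2-4d^2,
\]
where the inequality uses $\tfrac{x_n+x_m}{2}\in M$ by convexity, hence $\|x-\tfrac{x_n+x_m}{2}\|_X\ge d$. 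The right-hand side tends to $0$, so $(x_n)_n$ is Cauchy and, by closedness of $M$, converges to some $x'\in M$ with $\|x-x'\|_X=d$; thus $\Proj(x,M)\ne\emptyset$. Running the same computation with two minimizers $x',x''$ in place of $x_n,x_m$ gives $\|x'-x''\|_X^2\le 0$, so the minimizer is unique.

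For (b), convexity is unavailable, so I replace the Cauchy argument by weak compactness. A minimizing sequence $(x_n)_n\subset M$ with $\|x-x_n\|_X\to d:=\dist_X(x,M)$ is bounded, hence (since $X$ is a Hilbert space, thus reflexive) some subsequence satisfies $x_{n_k}\rightharpoonup x'$ for an $x'\in X$. Weak sequential closedness of $M$ gives $x'\in M$, and weak lower semicontinuity of the norm gives $\|x-x'\|_X\le\liminf_{k\to\infty}\|x-x_{n_k}\|_X=d$; since $x'\in M$ also forces $\|x-x'\|_X\ge d$, we obtain $\|x-x'\|_X=d$, i.e.\ $x'\in\Proj(x,M)$.

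For (c), I pick for each $n$ a point $y_n\in M$ with $\|x_n-y_n\|_X\le\dist_X(x_n,M)+\tfrac1n$, so that
\[
\|\bar x-y_n\|_X\le\|\bar x-x_n\|_X+\|x_n-y_n\|_X\le\|\bar x-x_n\|_X+\dist_X(x_n,M)+\tfrac1n\longrightarrow 0.
\]
Then $y_n\to\bar x$ with $(y_n)_n\subset M$, and closedness of $M$ yields $\bar x\in M$. The only genuinely non-routine ingredients are those in (b), namely weak sequential compactness of bounded subsets of $X$ and weak lower semicontinuity of $\|\cdot\|_X$; everything else is a direct estimate.
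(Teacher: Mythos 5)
Your proof is correct. The paper states this lemma without proof, remarking only that the facts are ``well known and easy to see''; your arguments --- the parallelogram-identity proof of the projection theorem for (a), weak sequential compactness of bounded sets plus weak lower semicontinuity of the norm for (b), and a near-minimizer/triangle-inequality argument for (c) --- are exactly the standard ones and supply that omitted proof completely.
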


The following standard observations will also be used later on.

\begin{lemma}
\label{CBC-composition}  
Let $Y$ be a reflexive Banach space, $Z$ a normed vector space and $T \in \cL(Y,Z)$.  Then for every 
$A \in \mc{CBC}(Y)$ we have $T(A) \subset \mc{CBC}(Z)$. Consequently, every map $F:M \to \mc{CBC}(Y)$ defined on an arbitrary set $M$ gives rise to a map 
$T \circ F: M \to \mc{CBC}(Z)$ 
\end{lemma}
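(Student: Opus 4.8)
The plan is to verify the four defining properties of $\mc{CBC}(Z)$ for the set $T(A)$ one at a time; three of them are immediate and only closedness requires an argument. Nonemptiness of $T(A)$ is clear since $A\neq\emptyset$. Convexity of $T(A)$ follows at once from the linearity of $T$ and the convexity of $A$. Boundedness follows from the operator bound $\|Ty\|_Z\le\|T\|_{\cL(Y,Z)}\,\|y\|_Y$ together with the boundedness of $A$.

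For closedness I would argue sequentially, which is enough since $Z$ is metrizable. Let $(z_n)_n\subset T(A)$ with $z_n\to z$ in $Z$, and choose $x_n\in A$ with $Tx_n=z_n$. Since $A$ is bounded and $Y$ is reflexive, the sequence $(x_n)_n$ has a weakly convergent subsequence $x_{n_k}\rightharpoonup x\in Y$. Because $A$ is convex and norm-closed, it is weakly sequentially closed by Mazur's theorem, so $x\in A$. As $T$ is bounded and linear, it maps weakly convergent sequences to weakly convergent sequences, hence $z_{n_k}=Tx_{n_k}\rightharpoonup Tx$ in $Z$. On the other hand $z_{n_k}\to z$ in norm, and therefore also weakly, so uniqueness of weak limits gives $z=Tx\in T(A)$. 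This shows that $T(A)$ is closed, and altogether $T(A)\in\mc{CBC}(Z)$.

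The final assertion is then an immediate consequence: given $F:M\to\mc{CBC}(Y)$ and $m\in M$, the set $(T\circ F)(m)=T(F(m))$ lies in $\mc{CBC}(Z)$ by what has just been proved, so $T\circ F$ is a well-defined map $M\to\mc{CBC}(Z)$.

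I expect the only genuine subtlety to be the use of reflexivity of $Y$: one needs that bounded sequences in a reflexive Banach space possess weakly convergent subsequences (Kakutani's characterization of reflexivity, or the Eberlein--\v{S}mulian theorem) and that norm-closed convex sets are weakly closed (Mazur). In particular, no compactness assumption on $T$ is required.
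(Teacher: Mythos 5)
Your proof is correct and follows essentially the same route as the paper's: convexity and boundedness from linearity and continuity of $T$, and closedness via reflexivity of $Y$ (extracting a weakly convergent subsequence from a bounded sequence in $A$), Mazur's theorem to keep the weak limit in $A$, and weak continuity of $T$ to identify the limit. No gaps.
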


\begin{proof}
Let $A \in \mc{CBC}(Y)$.  Since $T$ is linear and continuous, $T(A)$ is convex and bounded. To see that $T(A)$ is closed, 
we consider a sequence $(z_k)_k$ in $T(A)$ such that $z_k \to z \in Z$ as $k \to \infty$. 
Choosing $y_k \in A$ such that $T(y_k)=z_k$ for $k \in \N$, we obtain a bounded sequence $(y_k)_k$ in $A$. Since $Y$ is reflexive,
we may pass to a subsequence such that $y_k\rightharpoonup y \in Y$ as $\N'\ni k\rightarrow\infty$, and $y \in A$ by Mazur's
Theorem. 
Moreover, 
$$
T (y)= \wlim_{k \to \infty} T y_k= \wlim_{k \to \infty} z_k = z
$$
and thus $z \in T(A)$. Hence $T(A)$ is closed.
\end{proof}

\begin{lemma} \label{Minkowski}
Let $X$ be a reflexive Banach space, and let $M',M''\in\mc{CBC}(X)$.
Then 
\[M'+M'':=\{m'+m'':m'\in M,m''\in M''\}\in\mc{CBC}(X).\]
\end{lemma}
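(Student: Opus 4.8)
The plan is to verify the three defining properties of $\mc{CBC}(X)$ — convexity, boundedness, and closedness — in turn, following the pattern of the proof of Lemma \ref{CBC-composition}.

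Convexity and boundedness are immediate. For convexity, given $m_1'+m_1''$, $m_2'+m_2'' \in M'+M''$ with $m_i'\in M'$, $m_i''\in M''$, and $\lambda\in[0,1]$, one rewrites $\lambda(m_1'+m_1'')+(1-\lambda)(m_2'+m_2'')=\bigl(\lambda m_1'+(1-\lambda)m_2'\bigr)+\bigl(\lambda m_1''+(1-\lambda)m_2''\bigr)$ and invokes the convexity of $M'$ and $M''$ separately. For boundedness, the triangle inequality gives $\|m'+m''\|_X\le\|M'\|+\|M''\|$ for all $m'\in M'$ and $m''\in M''$.

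The only nontrivial point is closedness, and here reflexivity enters exactly as in the proof of Lemma \ref{CBC-composition}. Given a sequence $(z_k)_k$ in $M'+M''$ with $z_k\to z\in X$, write $z_k=m_k'+m_k''$ with $m_k'\in M'$ and $m_k''\in M''$. Since $M'$ and $M''$ are bounded, both $(m_k')_k$ and $(m_k'')_k$ are bounded, so by reflexivity we may — after successively passing to subsequences — extract a single subsequence $\N'\subset\N$ along which $m_k'\rightharpoonup m'$ and $m_k''\rightharpoonup m''$ for some $m',m''\in X$ as $\N'\ni k\to\infty$. By Mazur's theorem the closed convex sets $M'$ and $M''$ are weakly sequentially closed, whence $m'\in M'$ and $m''\in M''$. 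Adding the two weak limits yields $z_k=m_k'+m_k''\rightharpoonup m'+m''$ along $\N'$; since $z_k\to z$ strongly and hence weakly, uniqueness of the weak limit forces $z=m'+m''\in M'+M''$. Therefore $M'+M''$ is closed, and so $M'+M''\in\mc{CBC}(X)$.

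I do not expect a serious obstacle here; the proof is entirely routine. The one point worth stating carefully is the extraction of a \emph{single} subsequence along which both sequences $(m_k')_k$ and $(m_k'')_k$ converge weakly, which is handled by successive extraction as indicated above.
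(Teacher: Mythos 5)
Your proof is correct and follows essentially the same route as the paper's: reflexivity gives a weakly convergent subsequence of the bounded sequence in $M'$, Mazur's theorem keeps the weak limits in the closed convex sets, and uniqueness of the weak limit identifies $z=m'+m''$. The only cosmetic difference is that the paper extracts a subsequence for $(m_k')_k$ alone and then reads off $m_k''=z_k-m_k'\rightharpoonup z-m'$ directly, avoiding your second extraction, but this changes nothing of substance.
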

\begin{proof}
It is easy to check that $M'+M''$ is bounded and convex. 
We show that $M'+M''$ is closed. 
Let $(m_n)_n\subset M'+M''$ be any sequence with $\lim_{n\rightarrow\infty}m_n=m\in X$.
Then there exist $(m_n')_n\subset M'$ and $(m_n'')_n\subset M''$ such that $m_n=m_n'+m_n''$ for all $n$.
By the Banach-Alaoglu theorem, we have $m_n'\rightharpoonup m'\in M'$ along a subsequence,
so that $m_n''=m_n-m_n'\rightharpoonup m-m'$. 
By Mazur's lemma, $m'':=m-m'\in M''$. Therefore, $m=m'+m''\in M'+M''$.
\end{proof}

\section{An infinite-dimensional solvability theorem}  \label{section:solvability}

Let $V$ be a separable Hilbert space with scalar product $(\cdot,\cdot)_V$, associated norm $\|\cdot\|_V$. The main result of this section is the following 
solvability theorem. 

\begin{theorem} \label{solvability:hilbert}
Let $\tilde x, \bar y \in V$, $R>0$ and $l<0$, and let $F:B_V(\tilde x,R)\rightarrow\mc{CBC}(V)$ 
be a multivalued mapping.
\begin{itemize}
\item [a)] Let $F$ be bounded and c-uhc. If there exists some $\tilde y\in F(\tilde x)$ such that 
$\|\bar y-\tilde y\|_{V} \le -lR$ and
\[\forall x\in B_V(\tilde x,R)\ \exists y\in F(x):\ (y-\tilde y,x-\tilde x)_V \le l\|x-\tilde x\|_V^2,\]
then there exists some 
\[\bar x\in B_V(x_c,-\tfrac{1}{2l}\|\tilde y-\bar y\|_{V})\quad \text{with}\quad x_c=\tilde x+\tfrac{1}{2l}(\bar y-\tilde y),\] 
satisfying $\bar y\in F(\bar x)$.
\item [b)] If $F$ admits a modulus of continuity relative to $\tilde x$ in the sense that
\begin{equation*}
e_{V}(F(x),F(\tilde x)) \le \omega(\|x-\tilde x\|_V)
\end{equation*}
for all $x\in B_V(\tilde x,r)$ and some $r\le\min\omega^{-1}(\dist_{V}(\bar y,F(\tilde x)))$, 
then
\begin{equation*}
\|x-\tilde{x}\|_V \ge r\ \text{for all}\ x\in F^{-1}(\bar{y})\cap B_V(\tilde x,R).
\end{equation*}
\end{itemize}
\end{theorem}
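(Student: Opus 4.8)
The plan is to prove part a) via a Galerkin approximation scheme, reducing to the finite-dimensional solvability theorem (Theorem \ref{solvability:theorem}) on an increasing sequence of subspaces, and then to pass to the limit using the c-uhc property together with a weak compactness argument; part b) will follow by an elementary contradiction argument. Let me address them in turn.

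For part a), first I would fix an increasing sequence of finite-dimensional subspaces $V_1 \subset V_2 \subset \cdots$ whose union is dense in $V$ and which all contain $\tilde x$ and $\bar y$; let $P_n : V \to V_n$ denote the orthogonal projection. On each $V_n$ I would like to apply the finite-dimensional result to the mapping $F_n := P_n \circ F|_{B_{V_n}(\tilde x, R)}$, which takes values in $\mc{CBC}(V_n)$ by Lemma \ref{CBC-composition}. The ROSL-type inequality relative to $\tilde y$ is inherited: given $x \in B_{V_n}(\tilde x,R)$, pick $y \in F(x)$ with $(y-\tilde y,x-\tilde x)_V \le l\|x-\tilde x\|_V^2$; since $x - \tilde x \in V_n$ and $\tilde y \in V_n$, replacing $y$ by $P_n y$ does not change the inner product, so $P_n \tilde y = \tilde y \in F_n(\tilde x)$ works as the anchor point and the residual bound $\|\bar y - \tilde y\|_V \le -lR$ is unchanged. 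One must also check that $F_n$ satisfies whatever (semi)continuity the finite-dimensional theorem requires — here the boundedness and c-uhc of $F$ restricted to a finite-dimensional ball should give the needed usc-type property after composing with the continuous projection, possibly after intersecting with closed balls to pass from $\mc{CBC}$ to $\mc{CC}$. Theorem \ref{solvability:theorem} then yields $\bar x_n \in B_{V_n}(x_c, -\tfrac{1}{2l}\|\tilde y - \bar y\|_V)$ with $\bar y \in F_n(\bar x_n) = P_n F(\bar x_n)$, i.e. there exists $y_n \in F(\bar x_n)$ with $P_n y_n = \bar y$.

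The main obstacle is the passage to the limit $n \to \infty$, precisely because no strong compactness is available. The sequence $(\bar x_n)_n$ lies in the fixed closed ball $B_V(x_c, -\tfrac{1}{2l}\|\tilde y - \bar y\|_V)$, hence has a weakly convergent subsequence $\bar x_n \rightharpoonup \bar x$, and this ball is weakly sequentially closed so $\bar x$ lies in it; one checks $\bar x \in B_V(\tilde x, R)$ as well. The boundedness of $F$ gives that $(y_n)_n$ is bounded, so along a further subsequence $y_n \rightharpoonup \bar z$ for some $\bar z \in V$; since $P_n y_n = \bar y$ and $P_n \to I$ strongly, testing against any fixed $v \in \bigcup_m V_m$ shows $(\bar z, v)_V = (\bar y, v)_V$, whence $\bar z = \bar y$ by density. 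It then remains to show $\bar y \in F(\bar x)$. Here I would use c-uhc: for each $v \in V$ we have $\sigma_V(v, F(\bar x)) \ge \limsup_n \sigma_V(v, F(\bar x_n)) \ge \limsup_n (y_n, v)_V$, and since $y_n \rightharpoonup \bar y$ the right-hand side equals $(\bar y, v)_V$. Thus $(\bar y, v)_V \le \sigma_V(v, F(\bar x))$ for every $v \in V$, and because $F(\bar x) \in \mc{CBC}(V)$ is closed and convex this separation-type inequality forces $\bar y \in F(\bar x)$. A small technical point is that the above must be run along a single subsequence for which both $\bar x_n \rightharpoonup \bar x$ and $y_n \rightharpoonup \bar y$ hold; a diagonal extraction handles this.

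For part b), suppose for contradiction that some $x \in F^{-1}(\bar y) \cap B_V(\tilde x, R)$ satisfies $\|x - \tilde x\|_V < r$. Then $\bar y \in F(x)$, so $\dist_V(\bar y, F(\tilde x)) \le e_V(F(x), F(\tilde x)) \le \omega(\|x - \tilde x\|_V)$. Since $\omega$ is a modulus of continuity (nondecreasing, $\omega(0)=0$), monotonicity of $\omega$ and of $\omega^{-1}$ gives $\|x - \tilde x\|_V \ge \min \omega^{-1}(\dist_V(\bar y, F(\tilde x))) \ge r$ by the hypothesis $r \le \min \omega^{-1}(\dist_V(\bar y, F(\tilde x)))$, contradicting $\|x - \tilde x\|_V < r$. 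Hence no such $x$ exists and the claimed lower bound holds.
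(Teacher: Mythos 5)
Your proposal follows essentially the same route as the paper: Galerkin projection onto finite-dimensional subspaces, application of Theorem~\ref{solvability:theorem} to $P_n\circ F$, weak compactness of the resulting bounded sequence of approximate solutions, and identification of the weak limit via the c-uhc inequality together with convex separation; your part b) is the paper's argument verbatim (phrased as a contradiction rather than directly).

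The one substantive step you leave unverified --- that $F_n=P_n\circ F$ is usc in the sense \eqref{usc:condition} required by Theorem~\ref{solvability:theorem} --- is exactly where the paper has to do real work. Upper hemicontinuity is a support-function condition while upper semicontinuity is a Hausdorff-excess condition, so the implication is not automatic: the paper argues by contradiction, using \cite[Theorem 13.1]{Rockafellar:70} to produce unit vectors $v_k\in V_N$ with $(v_k,y_k)_V\ge\sigma_{V_N}(v_k,F_N(x))+\epsilon$ for offending elements $y_k\in F_N(x_k)$, extracting a norm-convergent subsequence $v_k\to v$ (possible only because $\dim V_N<\infty$), and then contradicting the uhc bound $\limsup_k\sigma_{V_N}(v,F_N(x_k))\le\sigma_{V_N}(v,F_N(x))$; boundedness of $F$ enters to control the error term $(v-v_k,y_k)_V$. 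So your ``should give the needed usc-type property'' hides a genuine (if standard) finite-dimensional compactness argument. Your parenthetical about intersecting with closed balls to pass from $\mc{CBC}$ to $\mc{CC}$ is unnecessary: $P_NF(x)$ is closed by Lemma~\ref{CBC-composition} and bounded, hence already compact in $V_N$. The remaining deviations are cosmetic: the paper first normalizes to $\tilde x=0$, $\bar y=0$ by translating $F$, and uses the anchor $\tilde y_N:=P_N\tilde y$ rather than assuming $\tilde y\in V_N$ (note you only required $\tilde x,\bar y\in V_n$ but later used $\tilde y\in V_n$; either fix the generating set or work with $P_n\tilde y$ as the paper does, observing that $\|\bar y-P_n\tilde y\|_V\le\|\bar y-\tilde y\|_V\le-lR$ when $\bar y\in V_n$).
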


A variant of part a) for finite-dimensional Hilbert spaces has been proved in \cite{Beyn:Rieger:14}. 
This variant will be used in the proof of Theorem~\ref{solvability:hilbert} together with a Galerkin approximation.  
For this we let $(w_k)_{k=1}^\infty$ denote an orthonormal basis of $V$, and for $N \in \N$ we consider the finite-dimensional
subspace $V_N: =\text{span}\{w_1,\ldots,w_N\} \subset V$. 
The orthogonal projection from $V$ onto $V_N$ is denoted $P_N$. 
We will then use the following reformulation of \cite[Theorem 3.1]{Beyn:Rieger:14}.

\begin{theorem} \label{solvability:theorem}
Let $\tilde x,\bar y\in V_N$, $R>0$ and $l<0$, let $F_N:B_{V_N}(\tilde x,R)\rightarrow\mc{CC}(V_N)$ 
be a multivalued mapping, and let $\tilde y\in F_N(\tilde x)$.
If $F_N$ is usc, if $\|\bar y-\tilde y\|_V\le-lR$ and if for every 
$x\in B_{V_N}(\tilde x,R)$ there exists some $y\in F_N(x)$ such that
\begin{equation*}
(y-\tilde y,x-\tilde x)_V \le l\|x-\tilde x\|_V^2,
\end{equation*}
then there exists some $\bar x\in B_{V_N}(x_c,-\tfrac{1}{2l}\|\bar y-\tilde y\|_{V})$ with
$x_c:=\tilde x+\tfrac{1}{2l}(\bar y-\tilde y)$ and $\bar y\in F_N(\bar x)$.
\end{theorem}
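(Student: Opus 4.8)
The plan is to reduce the set-valued problem to a single-valued one via a Cellina-type approximation built from the \emph{good} selections supplied by the hypothesis, and then to solve the approximate equations by a Brouwer degree argument on $K:=B_{V_N}(x_c,\rho)$ with $\rho:=-\tfrac{1}{2l}\|\bar y-\tilde y\|_V$. Before anything else I would record two preliminary facts. First, for every $x\in K$ one has $\|x-\tilde x\|_V\le\|x-x_c\|_V+\|x_c-\tilde x\|_V\le 2\rho=\tfrac{1}{|l|}\|\bar y-\tilde y\|_V\le R$, since $\|\bar y-\tilde y\|_V\le -lR$; hence $K\subseteq B_{V_N}(\tilde x,R)$, and $F_N$ is defined, usc and convex-compact-valued on $K$. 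Second, and crucially, I would rewrite the one-sided estimate in terms of the new center $x_c$. Using $\bar y-\tilde y=2l(x_c-\tilde x)$ and expanding, the hypothesis $(y-\tilde y,x-\tilde x)_V\le l\|x-\tilde x\|_V^2$ is algebraically equivalent to
\[
(y-\bar y,\,x-\tilde x)_V \;\le\; l\bigl(\|x-x_c\|_V^2-\rho^2\bigr)\qquad(x\in K),
\]
so that on the sphere $\partial K$ (where $\|x-x_c\|_V=\rho$) the good selection $y=y(x)$ satisfies $(\bar y-y,\,x-\tilde x)_V\ge 0$. This is the boundary condition that drives the whole proof.

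Next I would construct, for each refinement parameter $\delta>0$, a continuous single-valued map $f_\delta:K\to V_N$ by a partition-of-unity (Cellina) construction in which the value attached to each node $x_i$ is a \emph{good} selection $y_i=y(x_i)\in F_N(x_i)$, and in which $\tilde x$ serves as a node carrying the value $\tilde y$. Because $F_N$ is usc with convex values, $f_\delta$ will satisfy $\dist_V(f_\delta(x),F_N(x))\le\epsilon(\delta)\to0$; because the left-hand side of the displayed inequality is \emph{affine} in $y$, the estimate survives convex combinations over nodes of small diameter, yielding the approximate boundary bound $(\bar y-f_\delta(x),\,x-\tilde x)_V\ge -\eta(\delta)$ on $\partial K$ with $\eta(\delta)\to0$, and $f_\delta(\tilde x)=\tilde y$ exactly. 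This step is exactly where the argument must deviate from a naive application of fixed-point or degree theory to $F_N$ itself: the boundary condition holds only for the good selection, not for all of $F_N(x)$, so a set-valued degree would be inadmissible. Baking the good selections into $f_\delta$ is what circumvents this, in line with the difficulty flagged in the introduction.

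I would then solve the approximate equation $f_\delta(x)=\bar y$ on $K$ through the Brouwer degree of the continuous field $g_\delta:=\bar y-f_\delta$. By the Poincar\'e--Bohl criterion it suffices to check that $g_\delta(x)$ is never a strictly negative multiple of the outer radius $x-x_c$ on $\partial K$, for then $g_\delta$ is admissibly homotopic to $x\mapsto x-x_c$ and $\deg(g_\delta,K,0)=1$. Suppose $g_\delta(x)=-\mu(x-x_c)$ with $\mu>0$; inserting this into the approximate boundary bound gives $\mu\bigl((x-x_c,x_c-\tilde x)_V+\rho^2\bigr)\le\eta(\delta)$, which forces $x$ to lie within $O(\eta(\delta))$ of $\tilde x$. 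But near $\tilde x$ one has $x-x_c\approx\tilde x-x_c$ and $g_\delta(x)\approx\bar y-\tilde y=2l(x_c-\tilde x)$, so $g_\delta(x)$ is approximately the \emph{positive} multiple $2|l|(x-x_c)$ of the outer radius, contradicting $\mu>0$. Hence for small $\delta$ the homotopy is admissible, the degree equals $1$, and there is some $x_\delta\in K$ with $f_\delta(x_\delta)=\bar y$.

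Finally I would pass to the limit: by compactness of $K$ a subsequence $x_\delta\to\bar x\in K$, and since $\dist_V(f_\delta(x_\delta),F_N(x_\delta))\to0$ with $f_\delta(x_\delta)=\bar y$, there are $z_\delta\in F_N(x_\delta)$ with $z_\delta\to\bar y$; the closed-graph property of $F_N$, a standard consequence of upper semicontinuity with compact values, then yields $\bar y\in F_N(\bar x)$ with $\bar x\in K=B_{V_N}(x_c,\rho)$, as required. The degenerate case $\bar y=\tilde y$ (where $\rho=0$ and $x_c=\tilde x$) is trivial. The main obstacle, and the only genuinely delicate point, is the admissibility verification near the boundary point $\tilde x\in\partial K$: the radial test degenerates there, and one must use the explicit value $g_\delta(\tilde x)=2l(x_c-\tilde x)$ together with $l<0$ to see that the field points outward rather than inward.
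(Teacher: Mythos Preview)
First, note that the paper does not supply its own proof of this theorem: it is quoted as ``the following reformulation of \cite[Theorem~3.1]{Beyn:Rieger:14}'' and then used as a black box inside the Galerkin proof of Theorem~\ref{solvability:hilbert}. So there is no in-paper argument to compare against.

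Your preliminary reductions are correct: the inclusion $K\subset B_{V_N}(\tilde x,R)$ holds, and the identity $(y-\bar y,x-\tilde x)_V\le l(\|x-x_c\|_V^2-\rho^2)$ is an exact rewriting of the hypothesis, so the good selection satisfies $(\bar y-y,x-\tilde x)_V\ge0$ on $\partial K$. The Cellina step is also sound: a partition-of-unity combination of good selections gives $(\bar y-f_\delta(x),x-\tilde x)_V\ge-\eta(\delta)$ on $\partial K$ with $\eta(\delta)=O(\delta)$, and $\dist_V(f_\delta(x),F_N(x))\to0$.

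The gap is in the Poincar\'e--Bohl verification. From $g_\delta(x)=-\mu(x-x_c)$ you correctly deduce $\mu\,Q(x)\le\eta(\delta)$ with $Q(x):=\rho^2+(x-x_c,x_c-\tilde x)_V\ge0$, but this does \emph{not} force $x$ near $\tilde x$: it is equally compatible with $\mu$ small and $x$ anywhere on $\partial K$. Your fallback---that near $\tilde x$ the field points outward because $g_\delta(\tilde x)=\bar y-\tilde y=2|l|(\tilde x-x_c)$---rests on continuity of $f_\delta$, but the Cellina approximant has no $\delta$-uniform modulus: the good selection $y_i$ at a node $x_i$ close to $\tilde x$ must only satisfy $(y_i-\tilde y,x_i-\tilde x)_V\le l\|x_i-\tilde x\|_V^2$, which leaves the component of $y_i-\tilde y$ orthogonal to $x_i-\tilde x$ completely free. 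A concrete instance: with $V_N=\R^2$, $l=-1$, $\tilde x=\tilde y=0$, $\bar y=(1,0)$ and $F_N(x)=-x+\overline{B}(0,10)$, one may take $y_i=-x_i+(10,-\epsilon_i)$ at nodes with $(x_i)_2>0$; then $f_\delta(x)\approx(10,0)$ at points $x\in\partial K$ at distance $\sim\delta$ from $\tilde x$, and $g_\delta(x)\approx(-9,0)\approx -18(x-x_c)$, so the antipodal condition $g_\delta(x)=-\mu(x-x_c)$ with $\mu>0$ is actually \emph{realised} on $\partial K$. Thus the homotopy is inadmissible for this (legitimate) choice of good selections, and your degree computation fails as written. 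This is precisely the obstruction the paper's introduction warns about when it says that standard topological fixed-point tools cannot be applied to selections of ROSL multifunctions; baking the good selections into a Cellina approximant does not, by itself, circumvent it.
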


The remainder of this section is devoted to the 

\begin{proof}[Proof of Theorem~\ref{solvability:hilbert}]$ $\\ 
\emph{Statement a), special case:}
We begin with the case $\tilde x=0$, $\bar y=0$, 
in which $\|\tilde y\|_{V}\le-lR$,
and define $\tilde y_N := P_N\tilde y$.
Clearly, $B_{V_N}(0,R)\subset B_V(0,R) \subset V$ and $\|\tilde y_N\|_V \le -lR$,
and the mapping $F_N:B_{V_N}(0,R)\rightrightarrows V_N$ given by $F_N(x):=P_NF(x)$ is well-defined
with $\tilde y_N\in F_N(0)$. 
Moreover, $F_N$ satisfies the assumptions of Theorem \ref{solvability:theorem} on $B_{V_N}(0,R)$ 
with data $\tilde x_N=0$, $\bar y_N=0$ and $\tilde y_N$:
\begin{itemize}
\item [i)] The mapping $F_N$ is bounded on $B_{V_N}(0,R)$ with convex and compact values: 
As $V_N$ is finite-dimensional and $P_N \in \cL(V,V_N)$, this follows from Lemma~\ref{CBC-composition}.
\item [ii)] The mapping $F_N$ satisfies an ROSL-type condition: Let $x\in B_{V_N}(0,R)$. By assumption, there exists
some $y\in F(x)$ such that $(y-\tilde y,x) \le l\|x\|_V^2$, which implies that 
\[(P_N y-\tilde y_N, x)_V = (y-\tilde y, P_N x)_V = (y-\tilde y, x)_V  \le l\|x\|_V^2.\]
\item [iii)] The mapping $F_N$ is usc: 
Assume that $F_N$ is not usc at $x\in B_{V_N}(0,R)$. Then there exist $\epsilon>0$ and two sequences 
$(x_k)_{k\in\N}\subset B_{V_N}(0,R)$ and $(y_k)_{k\in\N}\subset V_N$ with 
$\lim_{k\rightarrow\infty} x_k = x$ and $y_k\in F_N(x_k)$ 
such that 
\[\dist_{V}(y_k,F_N(x))\ge\epsilon.\]
As a consequence of \cite[Theorem 13.1]{Rockafellar:70} there exist $(v_k)_{k\in\N}\subset V_N$ 
such that $\|v_k\|_{V}=1$ and 
\[(v_k,y_k)_V \ge \sigma_{V_N}(v_k,B_{V_N}(F_N(x),\epsilon)) = \sigma_{V_N}(v_k,F_N(x)) + \epsilon.\]
As $\dim V_N<\infty$, there exists $v\in V_N$ such that $\|v_k-v\|_V\rightarrow 0$ along a subsequence.
Hence
\begin{equation} \label{local:2}
\begin{aligned}
&(v,y_k)_V = (v-v_k,y_k)_V + (v_k,y_k)_V\\
&\ge -\sup_{x'\in B_{V_N}(0,R)}\|F_N(x')\|_V\|v-v_k\|_V + \sigma_{V_N}(v_k,F_N(x)) + \epsilon\\
& \rightarrow \sigma_{V_N}(v,F_N(x)) + \epsilon 
\end{aligned}
\end{equation}
as $k\rightarrow\infty$, because $\sup_{x'\in B_{V_N}(0,R)}\|F_N(x')\|_V<\infty$ according to a).
On the other hand, since $F$ is c-uhc, $F$ is uhc, and for any $v\in V_N$, we have
\begin{align*}
\limsup_{k\rightarrow\infty}\sigma_{V_N}(v,F_N(x_k)) 
&= \limsup_{k\rightarrow\infty}\sup_{y\in F(x_k)}(P_Ny,v)_V
= \limsup_{k\rightarrow\infty}\sup_{y\in F(x_k)}(y,v)_V\\
&= \limsup_{k\rightarrow\infty}\sigma_V(v,F(x_k))
\le \sigma_V(v,F(x))\\ 
&= \sup_{y\in F(x)}(y,v)_V
= \sup_{y\in F(x)}(P_Ny,v)_V
= \sigma_{V_N}(v,F_N(x)).
\end{align*}
This contradicts \eqref{local:2}, and hence $F_N$ is uhc.
\end{itemize}
Therefore, Theorem \ref{solvability:theorem} yields for every $N\in\N$ some 
$\bar x_N \in B_{V_N}(-\frac{1}{2l}\tilde y_N,-\frac{1}{2l}\|\tilde y_N\|_V)$
with $0\in F_N(\bar x_N)$. 
Rewrite $\bar x_N = -\frac{1}{2l}(\tilde y_N+v_N)$ with $\|v_N\|_V\le\|\tilde y_N\|_{V}\le\|\tilde y\|_{V}$.
Then there exists some $\bar v\in V$ with $\|\bar v\|_V\le\|\tilde y\|_{V}$ and such that $v_N\rightharpoonup \bar v$
along a subsequence $\N'\subset\N$. Since moreover $\tilde y_N\rightarrow \tilde y$ as $N\rightarrow\infty$, we infer that 
$$
\bar x_N \rightharpoonup \bar x:=-\tfrac{1}{2l}(\tilde y+\bar v)\in B_V(-\tfrac{1}{2l} \tilde y,-\tfrac{1}{2l}\|\tilde y\|_{V})\
 \text{as}\  \N'\ni N\rightarrow\infty.
$$
Furthermore, since $0\in F_N(\bar x_N)$, there exist elements $\phi_N\in F(\bar x_N)$ with  $P_N\phi_N=0$ for $N \in \N$, which implies that $(\phi_N, w_k) \to 0$ as $N \to \infty$ for every $k \in \N$. Since $(w_k)_k$ is an orthonormal basis of $X$ and the sequence, $(\phi_N)_N$ is bounded as $F$ is bounded, it follows that $\phi_N \rightharpoonup 0$ as $N \to \infty$. For arbitrary $v\in V$, we thus find that 
\[0 = \lim_{N\rightarrow\infty}\langle\phi_N,v\rangle \le \limsup_{N\rightarrow\infty}\sigma_V(v,F(\bar x_N)) 
\le \sigma_V(v,F(\bar x)),\]
because $F$ is c-uhc. This implies $0\in F(\bar x)$.

\medskip

\emph{Statement a), general case:} Consider $\tilde x\in V$, $\bar y\in V$ 
and the map $G:B_V(0,R)\rightarrow\mc{CBC}(V)$ given by
\[G(z):=F(z+\tilde x)-\bar y,\]
Clearly $y_0:=\tilde y-\bar y\in F(\tilde x)-\bar y = G(0)$. 
For any $x\in B_V(\tilde x,R)$, there exists $z\in B_V(0,R)$ such that $z+\tilde x=x$, and
by assumption, there exists some $y\in F(x)$ such that 
\[( y-\tilde y,x-\tilde x)_V \le l\|x-\tilde x\|_V^2 = l\|z\|_V^2.\]
But then $y':=y-\bar y\in F(x)-\bar y = G(z)$ satisfies
\begin{align*}
( y'-y_0,z)_V = \bigl( (y-\bar y)-(\tilde y-\bar y),x-\tilde x\bigr)_V = ( y-\tilde y,x-\tilde x)_V 
\le l\|z\|_V^2,
\end{align*}
so that $G$ satisfies all assumptions of Step 1,
which guarantees the existence of some $z_0\in B_V(0,R)$ with $0\in G(z_0)$ and
\[\|z_0+\tfrac{1}{2l}y_0\|_V \le -\tfrac{1}{2l}\|y_0\|_{V}.\]
Setting $\bar x:=\tilde x+z_0$ we obtain $\bar y\in F(\bar x)$ and 
\begin{align*}
\|\bar x-x_c\|_V &= \|\bar x -\tilde x-\tfrac{1}{2l}(\bar y-\tilde y)\|_V = \|z_0+\tfrac{1}{2l}y_0\|_V 
\le -\tfrac{1}{2l}\|y_0\|_{V} = -\tfrac{1}{2l}\|\tilde y-\bar y\|_{V}.
\end{align*}
\emph{Statement b):} Assume that $\bar y\in F(x)$ for some $x\in B_V(\tilde x,r)$.
Then
\[ \dist_{V}(\bar y,F(\tilde x)) \le e_{V}(F(x),F(\tilde x)) \le \omega(\|x-\tilde x\|_V)\]
implies 
\[\|x-\tilde x\|_V \ge \min\omega^{-1}(\dist_{V}(\bar y,F(\tilde x))) \ge r.\]
\end{proof}

\section{A reformulation for Gelfand triples} \label{sec:Gelfand}

The aim of this section is to adapt the above solvability theorem to a situation in which
the multivalued operator consists of two parts with different properties.
Theorem \ref{W:theorem} improves the approach presented in \cite{Beyn:Rieger:12}
by considering the problem in a space that is adapted to the composed operator.
We postpone a comparison of both results to Remark \ref{explanation} at the end of this section.
The most prominent setting, in which such a splitting occurs, will be discussed in the extended 
example in Section \ref{section:example}. 

Let $(V,\|\cdot\|_V,(\cdot,\cdot)_V)$ and $(H,\|\cdot\|_H,(\cdot,\cdot)_H)$ 
be separable Hilbert spaces such that $V$ is densely and continuously embedded into $H$ with embedding constant $c_{V\!H}>0$.
Identifying $H$ with its dual $H^*$, we then have embeddings
$$
V \overset{i}{\hookrightarrow} H \overset{i^*}{\hookrightarrow} V^*.
$$
Here $i^*$ denotes the dual of $i$, and this map is injective due to the density of $i(V)$ in $H$. As usual, we regard $V$ as a
subspace of $H$ and $H$ as a subspace of $V^*$, writing simply $v \in H$ instead of $i(v)$ and $w \in V^*$ instead of $i^*(w)$ for
$v \in V$, $w \in H$.   With these simplifications, we have  
$$
\langle y,x\rangle = (y,x)_H\quad \forall x\in V,\ y\in H.
$$
In the following, we fix constants 
\begin{equation}
  \label{eq:ineq-constants-lVlH}
l_V< 0\qquad \text{and}\qquad  l_H < -l_V/c_{VH}^2.  
\end{equation}
 Then the bilinear form 
$$
(x_1,x_2) \mapsto (x_1,x_2)_W:= -l_V (x_1,x_2)_V -l_H (x_1,x_2)_H,\qquad x_1,x_2 \in V
$$
is a scalar product which induces an equivalent norm $\|\cdot\|_W$ on $V$. In the following, $B_W(x,R)$ denotes the ball with radius $R>0$ w.r.t.  $\|\cdot\|_W$ centered at $x \in V$. Moreover, for $y \in V^*$, we denote by $\|y\|_{W^*}$ the dual norm induced by $\|\cdot\|_W$, i.e., 
$\|y\|_{W^*} = \sup \limits_{x \in V \setminus \{0\}}\frac{\langle y,x \rangle}{\|x\|_W}$ for $y \in V^*$. We also denote by $J_W:V^*\rightarrow V$ the corresponding canonical isometric isomorphism given by 
$$
(J_W\phi,v)_W = \langle\phi,v\rangle\quad \forall v\in V,\: \phi \in V^*.
$$
The following theorem is a variant of Theorem~\ref{solvability:hilbert} for composite operators.

\begin{theorem} \label{W:theorem}
Suppose that $l_V,l_H\in\R$ satisfy (\ref{eq:ineq-constants-lVlH}), and let $\tilde x\in V$, $\bar y\in V^*$ and $R>0$.
Moreover, let 
$$
F_V:B_W(\tilde x,R) \subset V \rightarrow\mc{CBC}(V^*)\quad \text{and}\quad F_H:B_W(\tilde x,R) \subset V \rightarrow\mc{CBC}(H)
$$ 
be bounded and c-uhc, and let $F: B_W(\tilde x,R) \rightrightarrows V^*$ be given by $F=F_V+ F_H$, i.e. 
$$
F(v) := \{y_V+y_H: y_V\in F_V(v), y_H\in F_H(v)\} \quad \text{for}\quad v \in V.
$$
Suppose furthermore that there exists 
$\tilde y_V\in F_V(\tilde x)$, $\tilde y_H\in F_H(\tilde x)$ such that 
$$
\|\bar y-\tilde y\|_{W^*}\le R 
$$
with $\tilde y:=\tilde y_V+\tilde y_H$ and 
\begin{equation} \label{partly:rosl}
\left. 
 \begin{aligned}
& \forall x\in B_W(\tilde x,R)\\
&\exists y_V\in F_V(x), y_H\in F_H(x)   
  \end{aligned}
 \right\}
\:\text{with}\:
\left \{
\begin{aligned}
&\langle y_V-\tilde y_V,x-\tilde x\rangle\le l_V\|x-\tilde x\|_V^2;\\
&(y_H-\tilde y_H,x-\tilde x)_H \le l_H\|x-\tilde x\|_H^2.
\end{aligned}
\right.
\end{equation}
Finally, let $x_c:=\tilde x-\tfrac{1}{2}J_W(\bar y-\tilde y)$. Then there exists some 
\begin{equation}
  \label{eq:estimate-gelfand-theorem}
\bar x\in B_W(x_c,\tfrac{1}{2}\|\tilde y-\bar y\|_{W^*})\quad \text{satisfying}\quad \text{$\bar y\in F(\bar x)$.}
 \end{equation}
\end{theorem}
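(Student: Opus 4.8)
The plan is to reduce Theorem~\ref{W:theorem} to Theorem~\ref{solvability:hilbert} applied on the Hilbert space $(V,(\cdot,\cdot)_W)$. The key observation is that the scalar product $(\cdot,\cdot)_W$ was chosen precisely so that the two different ROSL-type estimates in \eqref{partly:rosl}, one measured in the $V$-inner product and one in the $H$-inner product, combine into a single estimate in the $W$-inner product with the constant $l:=-\tfrac12$. Indeed, for $x\in B_W(\tilde x,R)$ choose $y_V\in F_V(x)$ and $y_H\in F_H(x)$ as in \eqref{partly:rosl} and set $y:=y_V+y_H\in F(x)\subset V^*$. Writing $\langle y-\tilde y,x-\tilde x\rangle=\langle y_V-\tilde y_V,x-\tilde x\rangle+(y_H-\tilde y_H,x-\tilde x)_H$ (using that $y_H,\tilde y_H\in H$ and $x-\tilde x\in V$, so the dual pairing becomes the $H$-inner product), the two bounds give
\begin{equation*}
\langle y-\tilde y,x-\tilde x\rangle\le l_V\|x-\tilde x\|_V^2+l_H\|x-\tilde x\|_H^2 = -\tfrac12\|x-\tilde x\|_W^2 = -\tfrac12\,(x-\tilde x,x-\tilde x)_W.
\end{equation*}
Next I would transfer everything from the dual pairing picture to the genuine Hilbert-space picture via the isometric isomorphism $J_W:V^*\to V$. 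Define $G:B_W(\tilde x,R)\to\mc{CBC}(V)$ by $G(x):=J_W(F(x))=J_W(F_V(x))+J_W(F_H(x))$; this is a well-defined $\mc{CBC}(V)$-valued map by Lemmas~\ref{CBC-composition} and \ref{Minkowski}, since $J_W\in\cL(V^*,V)$ and $V^*$ is reflexive. Set $\hat y:=J_W\bar y$, $\hat{\tilde y}:=J_W\tilde y\in G(\tilde x)$, and note that by the isometry property $\|\hat y-\hat{\tilde y}\|_W=\|\bar y-\tilde y\|_{W^*}\le R=-\tfrac{1}{2l}\cdot 2lR\,/\,(-1)$; more precisely with $l=-\tfrac12$ the hypothesis $\|\bar y-\tilde y\|_{W^*}\le R$ reads exactly $\|\hat{\tilde y}-\hat y\|_W\le -lR$. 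Using $(J_W(y-\tilde y),x-\tilde x)_W=\langle y-\tilde y,x-\tilde x\rangle$ for all $x$, the combined estimate above becomes, for each $x\in B_W(\tilde x,R)$, the existence of $\hat y_x:=J_W y\in G(x)$ with $(\hat y_x-\hat{\tilde y},x-\tilde x)_W\le l\|x-\tilde x\|_W^2$, which is exactly the ROSL-type hypothesis of Theorem~\ref{solvability:hilbert}a).

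It then remains to check the regularity hypotheses of Theorem~\ref{solvability:hilbert}a): that $G$ is bounded and c-uhc on $B_W(\tilde x,R)$, and that $B_W(\tilde x,R)$ is weakly sequentially closed (the latter is immediate, as it is closed, bounded and convex in the reflexive space $V$). Boundedness of $G$ follows from boundedness of $F_V,F_H$ and continuity of $J_W$. For c-upper hemicontinuity, I would argue that if $x_k\rightharpoonup x$ in $V$ (equivalently, weakly in $(V,\|\cdot\|_W)$, since the $W$- and $V$-norms are equivalent), then for every $v\in V$,
\begin{equation*}
\sigma_W(v,G(x_k))=\sup_{y\in F(x_k)}(J_W y,v)_W=\sup_{y\in F(x_k)}\langle y,v\rangle=\sigma^*_{V}(v,F_V(x_k))+\sigma^*_{V}(v,F_H(x_k)),
\end{equation*}
where in the last step I used $F=F_V+F_H$ and that the support function of a Minkowski sum is the sum of support functions; then c-uhc of $F_V$ (with the $V^*$--$V$ pairing) and of $F_H$ (for which I first regard $F_H(x_k)\subset H\hookrightarrow V^*$ and note $\sigma^*_{V}(v,F_H(x_k))=\sigma_H(v,F_H(x_k))$ since $\langle\cdot,v\rangle=(\cdot,v)_H$ on $H$) yield $\limsup_k\sigma_W(v,G(x_k))\le\sigma^*_{V}(v,F_V(x))+\sigma_H(v,F_H(x))=\sigma_W(v,G(x))$. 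Having verified all hypotheses, Theorem~\ref{solvability:hilbert}a) produces $\bar x\in B_W(\hat x_c,-\tfrac{1}{2l}\|\hat{\tilde y}-\hat y\|_W)$ with $\hat x_c=\tilde x+\tfrac{1}{2l}(\hat y-\hat{\tilde y})$ and $\hat y\in G(\bar x)=J_W(F(\bar x))$; applying the bijection $J_W^{-1}$ gives $\bar y\in F(\bar x)$, and substituting $l=-\tfrac12$, $\hat y-\hat{\tilde y}=J_W(\bar y-\tilde y)$, $\|\hat{\tilde y}-\hat y\|_W=\|\tilde y-\bar y\|_{W^*}$ turns the localization into exactly \eqref{eq:estimate-gelfand-theorem} with $x_c=\tilde x-\tfrac12 J_W(\bar y-\tilde y)$.

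The main obstacle I anticipate is the bookkeeping around the three inner products and the identifications $V\hookrightarrow H\hookrightarrow V^*$: one must be careful that $\langle y_H,x\rangle=(y_H,x)_H$ is legitimate (it is, by the stated identification, for $y_H\in H$, $x\in V$), that the splitting of the dual pairing $\langle y-\tilde y,x-\tilde x\rangle$ into a $V^*$--$V$ part and an $H$-part is valid, and that "weak convergence in $V$" is unambiguous because $\|\cdot\|_W\sim\|\cdot\|_V$. None of these is deep, but getting the constants to land on $l=-\tfrac12$ and reproducing the precise ball in \eqref{eq:estimate-gelfand-theorem} requires care. A secondary technical point is justifying that $G$ has $\mc{CBC}(V)$ values, which is where Lemma~\ref{Minkowski} and Lemma~\ref{CBC-composition} are invoked, together with reflexivity of $V$ and $V^*$.
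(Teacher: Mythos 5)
Your overall strategy is exactly the one the paper uses: transport everything to the Hilbert space $(V,(\cdot,\cdot)_W)$ via the isometry $J_W$, verify that $J_W\circ F$ satisfies the hypotheses of Theorem \ref{solvability:hilbert}\,a) (images in $\mc{CBC}(V)$ via Lemmas \ref{CBC-composition} and \ref{Minkowski}, boundedness, c-uhc by splitting the support function of the Minkowski sum and treating $F_V$ and $F_H$ separately), and read off the ball. Your verification of the regularity hypotheses and your handling of the identifications $V\hookrightarrow H\hookrightarrow V^*$ are sound.

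However, there is a concrete arithmetic error in the central step, and it propagates. By the definition of the mixed scalar product, $\|x\|_W^2=-l_V\|x\|_V^2-l_H\|x\|_H^2$, so the two estimates in \eqref{partly:rosl} combine to
\[
\langle y-\tilde y,x-\tilde x\rangle\le l_V\|x-\tilde x\|_V^2+l_H\|x-\tilde x\|_H^2=-\|x-\tilde x\|_W^2,
\]
i.e.\ the relevant ROSL constant is $l=-1$, not $l=-\tfrac12$ as you claim. With your value $l=-\tfrac12$ two things break: first, Theorem \ref{solvability:hilbert} requires $\|\tilde y_0-\bar y_0\|_W\le -lR=\tfrac12 R$, which does not follow from the stated hypothesis $\|\bar y-\tilde y\|_{W^*}\le R$; second, since $\tfrac{1}{2l}=-1$, the output ball would be $B_W\bigl(\tilde x-J_W(\bar y-\tilde y),\,\|\tilde y-\bar y\|_{W^*}\bigr)$, which is not the ball in \eqref{eq:estimate-gelfand-theorem}. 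Your assertion that substituting $l=-\tfrac12$ ``turns the localization into exactly \eqref{eq:estimate-gelfand-theorem}'' is therefore false as written. The fix is immediate: take $l=-1$, whereupon $-lR=R$ matches the hypothesis, $-\tfrac{1}{2l}=\tfrac12$ gives the correct radius, and $x_c=\tilde x+\tfrac{1}{2l}J_W(\bar y-\tilde y)=\tilde x-\tfrac12 J_W(\bar y-\tilde y)$ reproduces \eqref{eq:estimate-gelfand-theorem} exactly. With that correction the rest of your argument goes through unchanged and coincides with the paper's proof.
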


\begin{remark}
\label{sec:reform-gelf-tripl}{\rm 
(i) In the case where the embedding of $V$ in $H$ is compact, it suffices to assume that 
$F_H:B_W(\tilde x,R) \subset H \rightarrow\mc{CBC}(H)$ is bounded and uhc, because then $F_H:B_W(\tilde x,R) \subset V \rightarrow\mc{CBC}(H)$ is bounded and c-uhc.\\
 (ii) The assumption  (\ref{partly:rosl}) arises naturally in applications, and it is the reason for using the mixed norm $\|\cdot\|_W$ which then gives rise to optimal estimates, as explained in (iii) below.  Nevertheless, sufficient assumptions can easily be formulated in terms of $\|\cdot\|_V$ and $\|\cdot\|_H$ by using the estimates 
 \begin{equation*}
\left\{  
 \begin{aligned}
 &-l_V \|x\|_V^2 \le \|x\|_W^2 \le - (l_V + c_{VH}^2 l_H) \|x\|_V^2   &&\quad \text{if $l_H \le 0$,}\\
&- (l_V + c_{VH}^2 l_H) \|x\|_V^2 \le \|x\|_W^2   \le -l_V \|x\|_V^2 &&\quad \text{if $l_H \in [0,-\frac{l_V}{c_{VH}^2})$}   
\end{aligned}
\right.
 \end{equation*}
for $x \in V$ and 
\begin{equation}
   \label{eq:est-equiv-norms-2}
\left\{  
 \begin{aligned}
 &-\frac{1}{l_V} \|y\|_{V*}^2 \le \|y\|_{W^*}^2 \le - \frac{1}{l_V + c_{VH}^2 l_H} \|y\|_{V^*}^2   &&\quad \text{if $l_H \in [0,-\frac{l_V}{c_{VH}^2})$,}\\
&- \frac{1}{l_V + c_{VH}^2 l_H} \|y\|_{V^*}^2 \le \|y\|_{W^*}^2  \le -\frac{1}{l_V} \|y\|_{V^*}^2    &&\quad \text{if $l_H \le 0$}
 \end{aligned}
\right.
 \end{equation}
for $y \in V^*$.\\
(iii) In the case where $l_H \in [0,-\frac{l_V}{c_{VH}^2})$, (\ref{eq:estimate-gelfand-theorem}) and (\ref{eq:est-equiv-norms-2}) imply the estimate 
\begin{equation*} 
- l_V \|\bar x-x_c\|_V^2-l_H\|\bar x -x_c\|_H^2 \le -\frac{1}{4(l_V + c_{VH}^2 l_H)} \|\bar y-\tilde y\|_{V^*}^2. 
\end{equation*}
and therefore 
\begin{equation*} 
\|\bar x-x_c\|_H \le \tfrac{c_{V\!H}}{2(l_V+c_{V\!H}^2l_H)}\|\bar y-\tilde y\|_{V^*}. 
\end{equation*}
In the case where $l_H\le 0$, (\ref{eq:estimate-gelfand-theorem}) and (\ref{eq:est-equiv-norms-2})  imply the estimate 
\begin{equation} 
\label{V:estimate-2}
- l_V \|\bar x-x_c\|_V^2-l_H\|\bar x -x_c\|_H^2 \le -\frac{1}{4 l_V} \|\bar y-\tilde y\|_{V^*}^2. 
\end{equation}
and therefore 
\begin{equation} \label{H:estimate-2}
\|\bar x-x_c\|_H \le \tfrac{c_{V\!H}}{2\sqrt{l_V(l_V+c_{V\!H}^2l_H)}}\|\bar y-\tilde y\|_{V^*}. 
\end{equation}
Hence a negative one-sided Lipschitz constant $l_H$ of $F_H$ improves the estimate for $\|\bar x-x_c\|_H$, which is important in
the case where  $l_H\ll l_V<0$. }
\end{remark}

\begin{proof}[Proof of Theorem~\ref{W:theorem}]
We apply Theorem \ref{solvability:hilbert} to the Hilbert space $(V, (\cdot,\cdot)_W)$, the map $F_0:= J_W \circ F:  
B_W(\tilde x,R)\rightrightarrows V$ in place of $F$ and 
$\bar y_0:= J_W \bar y,\; \tilde y_0:= J_W \tilde y \in V$ in place of $\bar y, \tilde y$, respectively.  We check that $F_0$ satisfies the assumptions of Theorem \ref{solvability:hilbert}. 
\begin{itemize}
\item [i)] \emph{One-sided Lipschitz property.} Let $x\in B_W(\tilde x,R)$.
By \eqref{partly:rosl}, there exist $y_V\in F_V(x)$ and $y_H\in F_H(x)$ such that,
denoting $y:=J_W(y_V+y_H)$, we obtain $y\in F_0(x)$ and
\begin{equation} \label{kleiner:trick}
\begin{aligned}
( y-\tilde y_0 ,x-\tilde x )_W
&=\langle y_V+y_H-\tilde y ,x-\tilde x \rangle\\
&= \langle y_V-\tilde y_V,x-\tilde x\rangle +(y_H-\tilde y_H,x-\tilde x)_H\\
&\le l_V\|x-\tilde x\|_V^2 + l_H\|x-\tilde x\|_H^2 = - \|x-\tilde x\|_W^2
\end{aligned}
\end{equation}
Therefore, $F$ is relaxed one-sided Lipschitz relative to $\tilde x$ and $\tilde y$ on $B_W(\tilde x,R)$ with constant $l=-1$ w.r.t $\|\cdot\|_W$.
\item [ii)] \emph{Properties of the images.} 
For any $x\in B_W(\tilde x,R)$, we have $F_V(x)\in\mc{CBC}(V^*)$ and $F_H(x)\in\mc{CBC}(H)$. Consequently, $F_H(x)\in\mc{CBC}(V^*)$
by Lemma~\ref{CBC-composition}. Then Lemma \ref{Minkowski} guarantees that $F(x) \in\mc{CBC}(V^*)$. Again by Lemma~\ref{CBC-composition} it follows that $F_0(x)= J_W(F(x)) \in \mc{CBC}(V)$.
\item [iii)] \emph{$F_0$ is bounded and c-uhc.} The boundedness of $F_0$ is an easy consequence of the boundedness of the maps $F_V$ and $F_H$.  To show that $F_0$ is c-uhc, let $(x_n)_n\subset B_W(\tilde x,R)$ satisfy $x_n\rightharpoonup x\in B_W(\tilde x,R)$, and let $v \in V$. Using that $F_H$ is c-uhc as a map to $H$ by assumption, we then find that
\begin{align*}
&\limsup_{n\rightarrow\infty}\sigma_V^*(v,F_H(x_n)) 
= \limsup_{n\rightarrow\infty}\sup_{y\in F_H(x_n)}\langle y,v\rangle\\
&= \limsup_{n\rightarrow\infty}\sup_{y\in F_H(x_n)}(y,v)_H
= \limsup_{n\rightarrow\infty}\sigma_H(v,F_H(x_n))\\
&\le \sigma_H(v,F_H(x))
= \sup_{y\in F_H(x)}(y,v)_H\\
&= \sup_{y\in F_H(x)}\langle y,v\rangle
= \sigma_V^*(v,F_H(x)). 
\end{align*}
Combining this with the fact that $F_V:B_W(\tilde x,R) \subset V \rightarrow\mc{CBC}(V^*)$ is c-uhc by assumption, we find that 
\begin{align*}
&
\limsup_{n\rightarrow\infty}\sigma_V(v,F_0(x_n))=\limsup_{n\rightarrow\infty}\sigma_V^*(v,F(x_n))\\
&= \limsup_{n\rightarrow\infty}\sigma_V^*(v,F_V(x_n)+F_H(x_n))\\
&= \limsup_{n\rightarrow\infty}\{\sigma_V^*(v,F_V(x_n))+\sigma_V^*(v,F_H(x_n))\}\\
&\le \limsup_{n\rightarrow\infty}\sigma_V^*(v,F_V(x_n))+\limsup_{n\rightarrow\infty}\sigma_V^*(v,F_H(x_n))\\
&\le \sigma_V^*(v,F_V(x))+\sigma_V^*(v,F_H(x))
= \sigma_V^*(v,F_V(x)+F_H(x))\\
&= \sigma_V^*(v,F(x)) =\sigma_V (v,F_0(x)) ,
\end{align*}
and thus $F_0$ is c-uhc. 
Note that $\sigma_V$ is defined here w.r.t.\ $(\cdot,\cdot)_W$. 
\end{itemize}
As a consequence, Theorem \ref{solvability:hilbert} applies with $l=-1$ and yields the desired statement.
\end{proof}

\begin{remark} \label{explanation}
In \cite{Beyn:Rieger:12}, elliptic partial differential inclusions with ROSL right-hand sides have been considered 
as ROSL operator inclusions. 
Error estimates for Galerkin approximations have been obtained directly without usage of the mixed norm $\|\cdot\|_W$.
In that case, the core of such an estimate is an inequality similar to \eqref{kleiner:trick}, but of the shape
\begin{align*}
(y-\tilde y ,x-\tilde x)_V
&\le l_V\|x-\tilde x\|_V^2 + l_H\|x-\tilde x\|_H^2 \\
&\le l_V\|x-\tilde x\|_V^2 + \max\{0,l_H\}c_{VH}^2\|x-\tilde x\|_V^2,
\end{align*}
because the fixed point argument must be carried out in $V$ where the differential operator is defined.
Therefore, the ROSL constant of $F_H$ cannot be exploited when $l_H<0$, i.e.\ when this is most desirable.
In this situation, considering the inclusion in $V$ equipped with the mixed norm $\|\cdot\|_W$ as above yields 
estimates \eqref{V:estimate-2} and \eqref{H:estimate-2}.
\end{remark}

\section{Inverses of ROSL mappings} \label{section:inverses}

The properties of relaxed one-sided Lipschitz mappings have been studied, e.g., in \cite{Donchev:02}, \cite{Donchev:04}
and other works of the same author.
At that time, no quantitative information about solutions of algebraic inclusions was available,
and thus only qualitaive properties of these mappings and their inverses could be given.
With Theorem \ref{solvability:hilbert} at our disposal, we can now prove some basic properties of the inverses.

\begin{theorem} \label{theorem:inverse}
Let $F:V\rightarrow\mc{CBC}(V)$ be c-uhc, bounded on bounded sets and $l$-relaxed one-sided Lipschitz with $l<0$.
Then its inverse $F^{-1}:V\rightrightarrows V$ has nonempty weakly sequentially closed images, it is $-\tfrac{1}{l}$-Lipschitz, 
and it is $0$-relaxed one-sided Lipschitz.
Moreover, the images of $F^{-1}$ satisfy the explicit and implicit bounds
\begin{align}
\|F^{-1}(y)\|_V &\le -\tfrac{1}{l}\|F(0)\|_{V} -\tfrac{1}{l}\|y\|_{V}, \label{boundF}\\
\diam_V(F^{-1}(y)) &\le -\tfrac{1}{l}\sup_{x\in F^{-1}(y)} \diam_{V} F(x) \label{diameter} 
\end{align}
for any $y\in V$. In particular, $F^{-1}$ is bounded on bounded sets.
\end{theorem}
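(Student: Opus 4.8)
The plan is to establish each assertion separately. Nonemptiness of the images and the two localization statements (Lipschitz continuity and $0$-ROSL of $F^{-1}$) will follow from Theorem~\ref{solvability:hilbert}a); the a priori bounds \eqref{boundF} and \eqref{diameter}, and hence boundedness on bounded sets, follow directly from the $l$-ROSL property of $F$; and weak sequential closedness of the images follows from c-uhc. None of the steps is deep. The only points requiring care are that Theorem~\ref{solvability:hilbert}a) only localizes a preimage \emph{near a point $\tilde x$ at which a value $\tilde y$ close to $\bar y$ is already known}, so for the Lipschitz and ROSL estimates one must center the solvability theorem at a \emph{given} preimage $x\in F^{-1}(y)$ with $\tilde y:=y$; and that one must track signs, since $l<0$ reverses several inequalities.

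\emph{Images are nonempty and weakly sequentially closed.} Fix $y\in V$ and $\tilde y\in F(0)$. Since $F$ is bounded, c-uhc and globally $l$-ROSL, the hypotheses of Theorem~\ref{solvability:hilbert}a) hold on $B_V(0,R)$ with $\tilde x=0$, $\bar y=y$ and any $R\ge -\tfrac1l\|y-\tilde y\|_V$; the theorem produces $\bar x$ with $y\in F(\bar x)$, so $F^{-1}(y)\neq\emptyset$. If $x_n\in F^{-1}(y)$ and $x_n\rightharpoonup x$, then $(y,v)_V\le\sigma_V(v,F(x_n))$ for every $v\in V$, so c-uhc of $F$ gives $(y,v)_V\le\sigma_V(v,F(x))$ for all $v$; since $F(x)$ is closed and convex, a separation argument yields $y\in F(x)$, i.e.\ $x\in F^{-1}(y)$.

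\emph{Lipschitz continuity and $0$-ROSL of $F^{-1}$.} Let $y,y'\in V$ and $x\in F^{-1}(y)$, so $y\in F(x)$. Applying Theorem~\ref{solvability:hilbert}a) with $\tilde x:=x$, $\tilde y:=y$, $\bar y:=y'$ and $R:=-\tfrac1l\|y-y'\|_V$ (the ROSL hypothesis again holding on all of $B_V(x,R)$) gives $\bar x\in F^{-1}(y')$ with
\[
\bar x\in B_V\!\bigl(x_c,\,-\tfrac{1}{2l}\|y-y'\|_V\bigr),\qquad x_c=x+\tfrac{1}{2l}(y'-y).
\]
The triangle inequality then gives $\|\bar x-x\|_V\le -\tfrac1l\|y-y'\|_V$, and interchanging the roles of $y$ and $y'$ yields the $-\tfrac1l$-Lipschitz estimate. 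Writing $\bar x-x=\tfrac{1}{2l}(y'-y)+e$ with $\|e\|_V\le -\tfrac{1}{2l}\|y-y'\|_V$ and using Cauchy--Schwarz,
\[
(\bar x-x,\,y'-y)_V\le \tfrac{1}{2l}\|y'-y\|_V^2+\|e\|_V\|y'-y\|_V\le 0,
\]
which is precisely the $0$-ROSL property (geometrically, $\bar x$ lies in a ball through $x$ whose center lies in the direction $\tfrac{1}{2l}(y'-y)$).

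\emph{A priori bounds.} For \eqref{boundF}, let $\bar x\in F^{-1}(y)$. The $l$-ROSL property of $F$ applied to the pair of points $\bar x$ (with $y\in F(\bar x)$) and $0$ produces $\tilde y\in F(0)$ with $(y-\tilde y,\bar x)_V\le l\|\bar x\|_V^2$, whence $-l\|\bar x\|_V^2\le(\tilde y-y,\bar x)_V\le\|\tilde y-y\|_V\|\bar x\|_V$ and thus $\|\bar x\|_V\le -\tfrac1l\|\tilde y-y\|_V\le -\tfrac1l(\|F(0)\|_V+\|y\|_V)$. For \eqref{diameter}, let $x_1,x_2\in F^{-1}(y)$, so $y\in F(x_1)\cap F(x_2)$; the $l$-ROSL property applied to $x_1$ (with $y\in F(x_1)$) and $x_2$ gives $y'\in F(x_2)$ with $(y'-y,x_2-x_1)_V\le l\|x_2-x_1\|_V^2$, while $\|y'-y\|_V\le\diam_V F(x_2)$ since both lie in $F(x_2)$; Cauchy--Schwarz and division by $l<0$ then give $\|x_1-x_2\|_V\le -\tfrac1l\diam_V F(x_2)\le -\tfrac1l\sup_{x\in F^{-1}(y)}\diam_V F(x)$, so \eqref{diameter} follows by taking the supremum over $x_1,x_2$. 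Finally, \eqref{boundF} gives boundedness of $F^{-1}$ on bounded sets, since $\|F(0)\|_V<\infty$ because $F$ is bounded on bounded sets, which completes the proof.
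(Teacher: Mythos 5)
Your proposal is correct and follows essentially the same route as the paper: nonemptiness, the Lipschitz bound and the $0$-ROSL property via Theorem~\ref{solvability:hilbert} centered at a given preimage, the bounds \eqref{boundF} and \eqref{diameter} directly from the $l$-ROSL inequality with $x'=0$ resp.\ $x'\in F^{-1}(y)$, and weak sequential closedness from c-uhc together with the support-function characterization of closed convex sets. All sign manipulations with $l<0$ are handled correctly.
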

\begin{proof}
\emph{Properties of the images of $F^{-1}$.}
For arbitrary $y\in V$, apply Theorem \ref{solvability:hilbert} to the data $F$, $\tilde x=0$, $\bar y = y$ 
and arbitrary $\tilde y\in F(0)$ to find $F^{-1}(y)\neq\emptyset$.

Let $y\in V$, $x\in F^{-1}(y)$ and $x'\in V$.   
By the relaxed one-sided Lipschitz property, there exists some $y'\in F(x')$ such that
\[-\|y'-y\|_{V}\|x'-x\|_V \le (y'-y,x'-x)_V \le l\|x'-x\|_V^2\]
and thus
\begin{equation} \label{local:3}
\|x'-x\|_V \le -\tfrac{1}{l}\|y'-y\|_{V}. 
\end{equation}
Considering $x'=0$, we conclude that 
\[\|x\|_V \le -\tfrac{1}{l}\|y'-y\|_{V} \le -\tfrac{1}{l}\|y'\|_{V}-\tfrac{1}{l}\|y\|_{V} 
\le -\tfrac{1}{l}\|F(0)\|_{V}-\tfrac{1}{l}\|y\|_{V},\]
so that bound \eqref{boundF} holds. Moreover, considering $x'\in F^{-1}(y)$, we deduce from inequality \eqref{local:3} that 
\[\|x'-x\|_V \le -\tfrac{1}{l}\|y'-y\|_{V} \le -\tfrac{1}{l}\diam_{V} F(x'),\]
so that estimate \eqref{diameter} holds.

Let $y\in V$ and $x\in V$, and let $(x_k)_{k\in\N}\subset F^{-1}(y)$ be any sequence such that 
$x_k\rightharpoonup x$ as $k\rightarrow\infty$. 
As $y\in F(x_k)$, we have $\langle y,v\rangle\le\sigma_V(v,F(x_k))$ for all $v\in V$ and $k\in\N$,
and since $F$ is c-uhc, it follows that
\[(y,v)_V \le \limsup_{k\rightarrow\infty}\sigma_V(v,F(x_k)) \le \sigma_V(v,F(x))\]
for all $v\in V$, which shows $y\in F(x)$ and hence $x\in F^{-1}(y)$. 
Therefore, $F^{-1}(y)$ is weakly sequentially closed.

\emph{One-sided and Lipschitz estimates.}
Let $y,y'\in V$ and $x\in F^{-1}(y)$. By Theorem \ref{solvability:hilbert}, there exists
some $x'\in F^{-1}(y')$ such that 
\[x'\in B_V(x+\tfrac{1}{2l}(y'-y),-\tfrac{1}{2l}\|y'-y\|_{V}).\]
In particular,
\[\|x'-x\|_V \le -\tfrac{1}{l}\|y'-y\|_{V},\]
so that $F^{-1}$ is $-\tfrac{1}{l}$-Lipschitz.
Writing $x'=x+\tfrac{1}{2l}(y'-y)+v$ with $v\in B_V(0,-\tfrac{1}{2l}\|y'-y\|_{V})$, we find
\begin{equation*}
\begin{gathered}
\begin{aligned} 
(y'-y,x'-x)_V
&= (y'-y,\tfrac{1}{2l}(y'-y)+v)_V\\
&\le \|v\|_V\|y'-y\|_{V} + \tfrac{1}{2l}\|y'-y\|_{V}^2 \le 0,
\end{aligned}
\end{gathered}
\end{equation*}
so that $F^{-1}$ is $0$-relaxed one-sided Lipschitz. 
\end{proof}

Without additional structure, it seems difficult to say more about the properties
of the inverse. 
Some multivalued mappings arising in control theory or from uncertainties are
explicitly given in parameterized form and allow a more detailed analysis. 
The existence of such a parametrization implies weak-strong continuity
of the multifunction and hence is a substantially stronger assumption than the 
compact upper hemicontinuity required in Theorem \ref{solvability:hilbert}.

\begin{proposition}
Let $(U,d_U)$ be a metric space, and let $F:V\rightarrow\mc{CBC}(V)$ be parameterized by a function 
$f:V\times U\rightarrow V$ satisfying
\begin{itemize}
\item [a)] $F(x)=\cup_{u\in U}f(x,u)$ for all $x\in V$, 
\item [b)] $u\mapsto f(x,u)$ is continuous for all $x\in V$, and
\item [c)] $x\mapsto f(x,u)$ is continuous from $V$ endowed with the weak topology to $V$ endowed with
the norm topology, bounded on bounded sets, and $l$-one-sided Lipschitz for all $u\in U$
in the sense that
\[(f(x,u)-f(x',u),x-x')_V \le l\|x-x'\|_V^2\ \text{for all}\ u\in U\]
with a constant $l<0$ that is independent of $u$.
\end{itemize}
Then $F^{-1}:V\rightrightarrows V$ is parameterized by a function $g:V\times U\rightarrow V$ satisfying
\begin{itemize}
\item [d)] $F^{-1}(y)=\cup_{u\in U}g(y,u)$ for all $y\in V$, 
\item [e)] $u\mapsto g(y,u)$ is continuous for all $y\in V$, and
\item [f)] $y\mapsto g(y,u)$ is $-\tfrac{1}{l}$-Lipschitz and $0$-one-sided Lipschitz for all $u\in U$.
\end{itemize}
In particular, if $U$ is connected or path connected, the images of $F$ and $F^{-1}$ inherit these properties.
\end{proposition}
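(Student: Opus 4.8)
The plan is to treat each value of the parameter $u\in U$ separately and then to glue the resulting single-valued inverses together. First I would fix $u$ and study the single-valued map $f(\cdot,u):V\to V$ through the lens of Theorem~\ref{theorem:inverse}, viewing it as the multimap $x\mapsto\{f(x,u)\}$ with values in $\mc{CBC}(V)$. Its hypotheses are readily checked: singletons are closed, bounded and convex; the map is bounded on bounded sets by c); it is c-uhc because, for $x_k\rightharpoonup x$, the weak-to-strong continuity in c) gives $f(x_k,u)\to f(x,u)$ in $V$ and hence $\sigma_V(v,\{f(x_k,u)\})=(f(x_k,u),v)_V\to(f(x,u),v)_V$ for every $v\in V$; and it is $l$-ROSL because the one-sided Lipschitz inequality in c) is exactly the ROSL condition once we make the forced choice $y'=f(x',u)$. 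Theorem~\ref{theorem:inverse} then yields that $f(\cdot,u)^{-1}$ has nonempty values, is $-\tfrac{1}{l}$-Lipschitz and $0$-ROSL. Since $l<0$, the inequality $(f(x,u)-f(x',u),x-x')_V\le l\|x-x'\|_V^2$ also forces $f(\cdot,u)$ to be injective, so $f(\cdot,u):V\to V$ is a bijection. I would then \emph{define} $g(y,u)$ to be the unique $x\in V$ with $f(x,u)=y$; being single-valued, the two properties just obtained for $f(\cdot,u)^{-1}$ become precisely property~f).

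Property~d) is then bookkeeping: $x\in F^{-1}(y)$ means $y\in F(x)=\cup_{u\in U}f(x,u)$, i.e.\ $f(x,u)=y$ for some $u$, i.e.\ $x=g(y,u)$ for some $u$; hence $F^{-1}(y)=\cup_{u\in U}g(y,u)$, and in particular $F^{-1}$ has nonempty values.

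The one step that needs genuine care — and the one I would single out as the crux, since the whole point is to avoid compactness assumptions — is the continuity in the parameter, property~e). Here I would fix $y\in V$, take $u_k\to u_0$ in $U$, and set $x_k:=g(y,u_k)$, $x_0:=g(y,u_0)$, so that $f(x_k,u_k)=y=f(x_0,u_0)$. Applying the one-sided Lipschitz inequality from c) to the pair $x_k,x_0$ with parameter $u_k$ gives
\[
(y-f(x_0,u_k),\,x_k-x_0)_V=(f(x_k,u_k)-f(x_0,u_k),\,x_k-x_0)_V\le l\|x_k-x_0\|_V^2,
\]
and Cauchy--Schwarz together with $l<0$ turns this into
\[
\|x_k-x_0\|_V\le-\tfrac{1}{l}\,\|y-f(x_0,u_k)\|_V=-\tfrac{1}{l}\,\|f(x_0,u_0)-f(x_0,u_k)\|_V,
\]
which tends to $0$ by the continuity of $u\mapsto f(x_0,u)$ asserted in b). Thus $u\mapsto g(y,u)$ is sequentially continuous, hence continuous because $U$ is metric, which is property~e). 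Note that this estimate exploits the one-sided Lipschitz structure crucially and requires no compactness of $U$ or of any embedding.

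Finally, the ``in particular'' claim follows at once: $F(x)=\{f(x,u):u\in U\}$ is the image of $U$ under the continuous map $u\mapsto f(x,u)$ from b), hence connected (resp.\ path connected) whenever $U$ is, and likewise $F^{-1}(y)=\{g(y,u):u\in U\}$ is the continuous image of $U$ under $u\mapsto g(y,u)$ by property~e), so it inherits the same property.
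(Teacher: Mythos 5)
Your proposal is correct and follows essentially the same route as the paper's proof: apply Theorem~\ref{theorem:inverse} to each single-valued map $f(\cdot,u)$, use the strict one-sided Lipschitz inequality to get injectivity and hence a well-defined $g(y,u)$, verify d) by bookkeeping, and control $\|g(y,u_k)-g(y,u_0)\|_V$ by $-\tfrac{1}{l}\|f(g(y,u_0),u_k)-f(g(y,u_0),u_0)\|_V$ for e). Your derivation of that last estimate directly from the one-sided Lipschitz inequality via Cauchy--Schwarz is the same computation the paper performs through the Lipschitz property of $g(\cdot,u_k)$, and your explicit verification of the c-uhc hypothesis for the singleton-valued maps is a welcome detail the paper leaves implicit.
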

\begin{proof}
Applying Theorem \ref{theorem:inverse} to the functions $x\mapsto f(x,u)$, $u\in U$, yields the existence of
inverses $g(\cdot,u):V\rightrightarrows V$, $u\in U$, given by
\[g(y,u) := \{x\in V: f(x,u)=y\},\]
that are well-defined, $-\tfrac{1}{l}$-Lipschitz and $0$-ROSL, so that f) holds. 
If $y\in V$, $u\in U$ and $x,x'\in g(y,u)$, then
\[0 = (f(x,u)-f(x',u),x'-x)_V \le l\|x-x'\|_V^2,\]
which enforces $x=x'$, so that $g$ is a single-valued function.

Let $y\in V$ and $x\in F^{-1}(y)$. Then $y\in F(x)$, and hence there exists $u\in U$ with $y=f(x,u)$,
so that $x=g(f(x,u),u)=g(y,u)$, so that d) is valid.

Let $y\in V$ and $u\in U$ be arbitrary, and let $(u_n)_n\subset U$ be such that $d_U(u,u_n)\rightarrow 0$
as $n\rightarrow\infty$.
By Lipschitz continuity of $y\mapsto g(y,u_n)$, we have
\begin{align*}
&\|g(y,u)-g(y,u_n)\|_V \le -\tfrac{1}{l}\|f(g(y,u),u_n)-f(g(y,u_n),u_n)\|_{V}\\
&\le -\tfrac{1}{l}\|f(g(y,u),u_n)-f(g(y,u),u)\|_{V} -\tfrac{1}{l}\|f(g(y,u),u)-f(g(y,u_n),u_n)\|_{V}\\
&= -\tfrac{1}{l}\|f(g(y,u),u_n)-f(g(y,u),u)\|_{V} \rightarrow 0\ \text{as}\ n\rightarrow\infty.
\end{align*}
Hence $u\mapsto g(y,u)$ is continuous, and we have verified e).
\end{proof}

\section{Numerical solution of algebraic inclusions} \label{section:solver}

We propose the algorithm given below in \eqref{algorithm:formulation} for the computation of a solution 
of the algebraic inclusion $\bar y \in F(x)$, where $\bar y\in V$ is given and $F$ is $l$-ROSL and $L$-Lipschitz 
with $l<0$ and a moderate Lipschitz constant $L>0$.
According to estimate \eqref{dist:point} below considered for $n=0$, the method, when analyzed without round-off errors, 
i.e.\ with $\xi_n=0$ for all $n\in\N$, finds a solution $\bar x$ with 
\[\|x_0-\bar x\|_V \le -\tfrac{1}{2l+L}\dist_{V}(\bar y,F(x_0)).\]
When applied in this context to the points $\tilde x=x_0$ and $\tilde y=\Proj_V(\bar y,F(x_0))$, 
Theorem \ref{solvability:hilbert} guarantees the existence of a solution $\bar x'$ of the same inclusion with
\[\bar x'\in B_V(x_c,-\tfrac{1}{2l}\dist_{V}(\bar y,F(x_0)))\quad \text{and}\quad 
x_c=x_0+\tfrac{1}{2l}(\bar y-\Proj_V(\bar y,F(x_0))),\] 
which means that 
\[\|x_0-\bar x'\|_V \le -\tfrac{1}{l}\dist_{V}(\bar y,F(x_0)).\]
Therefore, the solution $\bar x$ of the numerical method is up to a factor $\tfrac{1}{2+L/l}$ as close to the 
initial value $x_0$ as the theoretical estimate.
This is not necessarily true for an arbitrary scheme in the set-valued context,
and therefore an interesting feature of this algorithm.

Moreover, it is currently unclear whether a continuous and $l$-ROSL multivalued mapping 
possesses a selection or a parametrization that is continuous and (uniformly) $l$-one-sided Lipschitz.
This means that, in general, it is impossible to apply a standard numerical method to 
a single-valued selection $f$ of $F$ and to compute in this way a solution of the 
multivalued problem. 
The most promising construction in this direction has been published in \cite{Baier:Farkhi:07},
where set-valued mappings were parameterized by generalized Steiner points of their images.

The basic technique behind the following proposition is the same as in \cite[Proposition 4.1]{Beyn:Rieger:14}.
There are, however, some additional difficulties, because the images of $F^{-1}$ are not compact
and iterates cannot be computed exactly in the current setting.
Computational errors will therefore be modelled by a sequence $(\xi_n)_n\subset V$.

\begin{proposition}
Let $\bar y\in V$, and let $F:V\rightarrow\mc{CBC}(V)$ be c-uhc, $L$-Lipschitz and $l$-relaxed 
one-sided Lipschitz with $l<0$ such that $0\le\kappa:=-\tfrac{L}{2l}<1$.
For arbitrary $x_0\in V$, define $(v_n)_n\subset V$ and $(x_n)_n\subset V$ by
\begin{equation} \label{algorithm:formulation}
\begin{aligned}
v_n := \bar y-\Proj_{V}(\bar y,F(x_n)),\quad 
x_{n+1} := x_n+\tfrac{1}{2l}v_n+\xi_n,\quad n\in\N,
\end{aligned}
\end{equation}
where $(\xi_n)_n\subset V$ is an arbitrary sequence such that $\sum_{n=0}^\infty\|\xi_n\|_V<\infty$.
Then the sequence $(\eta_n)_n\subset\R_+$ given by 
\[\eta_n:=\sum_{k=0}^n\kappa^k\|\xi_{n-k}\|_V\]
satisfies $\sum_{n=0}^\infty\eta_n<\infty$, and the sequence 
$(x_n)_n$ converges to some $\bar x\in F^{-1}(\bar y)$ with estimates
\begin{align}
\dist_V(x_n,F^{-1}(\bar y)) &\le -\tfrac{\kappa^{n-1}}{2l}\|v_0\|_{V}+\eta_{n-1}, \label{dist:set}\\
\|x_n-\bar x\|_V &\le -\tfrac{1}{2l}\tfrac{\kappa^n}{1-\kappa}\|v_0\|_{V} + \sum_{j=n}^\infty\eta_{j}. \label{dist:point}
\end{align}
\end{proposition}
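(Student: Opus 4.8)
The plan is to run the iteration as a perturbed contraction in the complete metric space $V$, using Theorem~\ref{solvability:hilbert} at each step to control the distance from $x_n$ to the solution set $F^{-1}(\bar y)$, and then to upgrade the convergence of $(x_n)$ to a limit point and verify membership in $F^{-1}(\bar y)$ using that $F^{-1}(\bar y)$ is weakly sequentially closed (Theorem~\ref{theorem:inverse}). First I would note that $\kappa^n\to 0$, and that $\sum_n\eta_n = \sum_n\sum_{k=0}^n\kappa^k\|\xi_{n-k}\|_V \le (\sum_{k\ge 0}\kappa^k)(\sum_m\|\xi_m\|_V) = \tfrac{1}{1-\kappa}\sum_m\|\xi_m\|_V < \infty$ by Tonelli/rearrangement of the nonnegative double series; this is the easy summability claim.

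The core is the one-step estimate. Fix $n$ and set $\tilde x := x_n$, $\tilde y := \Proj_V(\bar y, F(x_n))$ (a single point by Lemma~\ref{projections}a), so that $v_n = \bar y - \tilde y$ and $\|v_n\|_V = \dist_V(\bar y, F(x_n))$. Apply Theorem~\ref{solvability:hilbert}a) with $R$ large enough (any $R \ge -\tfrac{1}{l}\|v_n\|_V$ works, since $F$ is $l$-ROSL and bounded on bounded sets, and $\|\bar y - \tilde y\|_V \le -lR$ then holds): this yields a point $\bar x_n \in F^{-1}(\bar y)$ with $\|\bar x_n - (x_n + \tfrac{1}{2l}v_n)\|_V \le -\tfrac{1}{2l}\|v_n\|_V$. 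Hence, using $x_{n+1} = x_n + \tfrac{1}{2l}v_n + \xi_n$,
\begin{equation*}
\dist_V(x_{n+1}, F^{-1}(\bar y)) \le \|x_{n+1} - \bar x_n\|_V \le \|x_n + \tfrac{1}{2l}v_n - \bar x_n\|_V + \|\xi_n\|_V \le -\tfrac{1}{2l}\|v_n\|_V + \|\xi_n\|_V.
\end{equation*}
Next I would bound $\|v_n\|_V = \dist_V(\bar y, F(x_n))$ in terms of $\dist_V(x_n, F^{-1}(\bar y))$: picking $z_n \in F^{-1}(\bar y)$ with $\|x_n - z_n\|_V$ close to $\dist_V(x_n, F^{-1}(\bar y))$ (it is attained since $F^{-1}(\bar y)$ is weakly sequentially closed, Lemma~\ref{projections}b), and using that $\bar y \in F(z_n)$, $L$-Lipschitz continuity of $F$ gives $\dist_V(\bar y, F(x_n)) \le e_V(F(z_n), F(x_n)) \le L\|x_n - z_n\|_V = L\,\dist_V(x_n, F^{-1}(\bar y))$. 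Substituting, $\dist_V(x_{n+1}, F^{-1}(\bar y)) \le \kappa\,\dist_V(x_n, F^{-1}(\bar y)) + \|\xi_n\|_V$ with $\kappa = -L/(2l) \in [0,1)$. Iterating this recursion from $n=1$ (with $\dist_V(x_1, F^{-1}(\bar y)) \le -\tfrac{1}{2l}\|v_0\|_V + \|\xi_0\|_V$ as base case) yields exactly \eqref{dist:set} after collecting the geometric-weighted error sum into $\eta_{n-1}$.

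For \eqref{dist:point} and convergence I would estimate the step sizes directly: $\|x_{n+1} - x_n\|_V \le -\tfrac{1}{2l}\|v_n\|_V + \|\xi_n\|_V \le \kappa\,\dist_V(x_n, F^{-1}(\bar y)) + \|\xi_n\|_V$, and plugging in \eqref{dist:set} shows $\|x_{n+1} - x_n\|_V \le -\tfrac{\kappa^n}{2l}\|v_0\|_V + \kappa\eta_{n-1} + \|\xi_n\|_V$, whose terms are summable; so $(x_n)$ is Cauchy in $V$ and converges to some $\bar x \in V$. Summing $\|x_{n+1}-x_n\|_V$ over the tail gives the bound $\|x_n - \bar x\|_V \le -\tfrac{1}{2l}\tfrac{\kappa^n}{1-\kappa}\|v_0\|_V + \sum_{j \ge n}(\text{tail of the }\eta\text{-type terms})$, which I would arrange to match \eqref{dist:point} exactly by bookkeeping the geometric sums. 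Finally, since $\dist_V(x_n, F^{-1}(\bar y)) \to 0$ by \eqref{dist:set} and $x_n \to \bar x$ strongly, and $F^{-1}(\bar y)$ is closed (being weakly sequentially closed), Lemma~\ref{projections}c) gives $\bar x \in F^{-1}(\bar y)$. The main obstacle is purely the bookkeeping: getting the constants in \eqref{dist:set} and \eqref{dist:point} to come out with the precise indices and the exact $\eta_n$ and $\tfrac{\kappa^n}{1-\kappa}$ coefficients stated, rather than with harmless but cosmetically different geometric bounds — the analytic content (one-step contraction via Theorem~\ref{solvability:hilbert}, Lipschitz bound on the residual, completeness of $V$, closedness of $F^{-1}(\bar y)$) is straightforward.
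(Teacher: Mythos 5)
Your proposal is correct and follows essentially the same route as the paper: apply Theorem~\ref{solvability:hilbert} at $\tilde x=x_n$, $\tilde y=\Proj_V(\bar y,F(x_n))$ to get the one-step bound $\dist_V(x_{n+1},F^{-1}(\bar y))\le -\tfrac{1}{2l}\|v_n\|_V+\|\xi_n\|_V$, use the $L$-Lipschitz property together with a nearest point of the weakly sequentially closed set $F^{-1}(\bar y)$ to close the contraction loop, sum the step sizes to get a Cauchy sequence, and conclude via Lemma~\ref{projections}c). The only cosmetic difference is that you iterate the recursion in $d_n=\dist_V(x_n,F^{-1}(\bar y))$ while the paper iterates it in the residuals $\|v_n\|_V$ (obtaining $\|v_{n+1}\|_V\le\kappa\|v_n\|_V+L\|\xi_n\|_V$); both yield \eqref{dist:set} and \eqref{dist:point} with the stated constants.
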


\begin{proof}
According to Lemma \ref{projections}, the projection $\Proj_{V}(\bar y,F(x))$ is a singleton for every $x\in V$, so that
the sequences $(v_n)_n$ and $(x_n)_n$ are well-defined.
Note that
\[\sum_{j=0}^\infty\eta_j=(\sum_{j=0}^\infty\kappa^j)(\sum_{j=0}^\infty\|\xi_j\|_V)<\infty\]
by the Cauchy product formula. Applying Theorem \ref{solvability:hilbert} for every $n\in\N$ with $\tilde x = x_{n}$ and $\tilde y = \Proj_{V}(\bar y,F(x_n))$, 
we find $\bar x_n\in F^{-1}(\bar y)$
such that
\begin{equation} \label{dist:est}
\begin{aligned}
&\dist_V(x_{n+1},F^{-1}(\bar y)) \le \|x_{n+1}-\bar x_n\|_V\\ 
&= \|x_n+\tfrac{1}{2l}v_n+\xi_n-\bar x_n\|_V \le -\tfrac{1}{2l}\|v_n\|_{V}+\|\xi_n\|_V.
\end{aligned}
\end{equation}
By Theorem \ref{theorem:inverse}, the preimage $F^{-1}(\bar y)$ is weakly sequentially closed, 
and therefore, by Lemma \ref{projections}, there exist points $\tilde x_n\in F^{-1}(\bar y)$
such that 
\[\|x_{n+1}-\tilde x_n\|_V = \dist_V(x_{n+1},F^{-1}(\bar y))\ \forall n\in\N.\]
It follows from inequality \eqref{dist:est} that
\begin{align*}
\|v_{n+1}\|_{V} &= \dist_{V}(\bar y,F(x_{n+1})) 
\le e_{V}(F(\tilde x_n),F(x_{n+1})) \le L\|\tilde x_n-x_{n+1}\|_V\\
&= L\dist_V(x_{n+1},F^{-1}(\bar y)) \le -\tfrac{L}{2l}\|v_n\|_{V} + L\|\xi_n\|_V = \kappa\|v_n\|_{V} + L\|\xi_n\|_V,
\end{align*}
so that 
\[\|v_n\|_{V} \le \kappa^n\|v_0\|_{V} + L\eta_{n-1},\]
and, again because of \eqref{dist:est}, we have
\begin{align*} 
\dist_V(x_n,F^{-1}(\bar y)) &\le -\tfrac{1}{2l}\|v_{n-1}\|_{V} + \|\xi_{n-1}\|_V \\
&\le -\tfrac{\kappa^{n-1}}{2l}\|v_0\|_{V}+\kappa\eta_{n-2}+\|\xi_{n-1}\|_V
= -\tfrac{\kappa^{n-1}}{2l}\|v_0\|_{V}+\eta_{n-1},
\end{align*}
which is \eqref{dist:set}. 
Then
\begin{align*}
\|x_{n+1}-x_n\|_V &\le -\tfrac{1}{2l}\|v_n\|_{V} + \|\xi_n\|_V \\
&\le -\tfrac{\kappa^n}{2l}\|v_0\|_{V} + \kappa\eta_{n-1} + \|\xi_n\|_V  
\le -\tfrac{\kappa^n}{2l}\|v_0\|_{V} + \eta_n
\end{align*}
implies that for any $n>k$, we have
\begin{equation} \label{cauchy:estimate}
\begin{aligned}
\|x_n-x_k\|_V &\le \sum_{j=k}^{n-1}\|x_{j+1}-x_j\|_V 
\le \sum_{j=k}^{n-1}(-\tfrac{\kappa^j}{2l}\|v_0\|_{V} + \eta_j)\\
&\le -\tfrac{1}{2l}\|v_0\|_{V}\tfrac{\kappa^k}{1-\kappa} + \sum_{j=k}^\infty\eta_{j}
\rightarrow 0\ \text{as}\ k\rightarrow\infty.
\end{aligned}
\end{equation}
In particular, the sequence $(x_n)_n\subset V$ is Cauchy and hence converges to some $\bar x\in V$.
Since $F^{-1}(\bar y)$ is weakly sequentially closed and inequality \eqref{dist:set} holds, 
Lemma \ref{projections} guarantees that $\bar x\in F^{-1}(\bar y)$.
Estimate \eqref{dist:point} follows from \eqref{cauchy:estimate} by passing to the limit $n\rightarrow\infty$.
\end{proof}

\section{Example} \label{section:example}

In this section, we consider a class of systems of elliptic differential inclusions.
Scalar partial differential inclusions with ROSL right-hand sides have been studied in \cite{Beyn:Rieger:12}.
Existence and relaxation theorems have been proved in a more general context. 
For a recent contribution, we refer to \cite{Cheng:Cong:Xue:11}.
Elliptic partial differential inclusions with multivalued mappings given in terms of subdifferentials 
have been studied, e.g., in the monograph \cite{Carl:Le:Motreanu:07}.

\subsection{A system of elliptic differential inclusions} \label{example:theory}

We consider the system
\begin{equation} \label{original:system}
\begin{gathered}
\begin{aligned}
&(-\Delta u_1(x),-\Delta u_2(x)) \in f(x,u_1(x),u_2(x)),\quad x\in\Omega,\\
&u_1(x)=u_2(x)=0,\quad x\in\partial\Omega,
\end{aligned}
\end{gathered}
\end{equation}
of elliptic partial differential inclusions, where $\Omega\subset\R^d$ is a bounded domain, and
let $f:\Omega\times\R^2\rightarrow\mc{CC}(\R^2)$ be a multivalued mapping with the following properties.
\begin{itemize}
\item [A1)] The mapping $f$ is Caratheodory in the sense that
$x\mapsto f(x,s_1,s_2)$ is measurable for any $(s_1,s_2)\in\R^2$ 
and $(s_1,s_2)\mapsto f(x,s_1,s_2)$ is continuous for almost every $x\in\Omega$.
\item [A2)] The mapping $f$ is uniformly $l_f$-ROSL in the sense that for almost every $x\in\Omega$
and every $s=(s_1,s_2)\in\R^2$, $t=(t_1,t_2)\in\R^2$ and $\eta=(\eta_1,\eta_2)\in f(x,s_1,s_2)$, there exists
$\rho=(\rho_1,\rho_2)\in f(x,t_1,t_2)$ with
\[\langle \rho-\eta,t-s \rangle \le l_f\|t-s\|_2^2,\]
where $\langle\cdot,\cdot\rangle$ denotes the Euclidean scalar product on $\R^2$.
\item [A3)] The mapping $f$ is linearly bounded in the sense that there exist $\alpha\in L^2(\Omega)$
and $\beta\ge 0$ with
\[\|f(x,s_1,s_2)\|_2 \le \alpha(x) + \beta\|(s_1,s_2)\|_2\quad\text{for a.e.}\ x\in\Omega,\ \forall(s_1,s_2)\in\R^2.\]
\end{itemize}
The weak formulation of (\ref{original:system}) is as follows. A pair $(u_1,u_2)\in H^1_0(\Omega)\times H^1_0(\Omega)$ of functions is called a weak solution of (\ref{original:system}) if 
\begin{equation} \label{variational}
\begin{gathered}
\begin{aligned}
&\exists h_1,h_2\in L^2(\Omega)\ \text{s.t.}\\
&(\nabla u_1,\nabla w)_{L^2} = (h_1,w)_{L^2}\quad\forall w\in H^1_0(\Omega)\\
&(\nabla u_2,\nabla w)_{L^2} = (h_2,w)_{L^2}\quad\forall w\in H^1_0(\Omega)\\
&(h_1(x),h_2(x)) \in f(x,u_1(x),u_2(x))\quad\text{for a.e.}\ x\in\Omega
\end{aligned}
\end{gathered}
\end{equation}
To simplify the notation, we denote $V:=H^1_0(\Omega)\times H^1_0(\Omega)$ and $H:=L^2(\Omega)\times L^2(\Omega)$ 
with $V^*=H^{-1}(\Omega)\times H^{-1}(\Omega)$.
The spaces $V$ and $H$ are equipped with the scalar products
\[(u,v)_V := (u_1,v_1)_{H^1_0} + (u_2,v_2)_{H^1_0},\quad (h,g)_H := (h_1,g_1)_{L^2} + (h_2,g_2)_{L^2}\]
and the corresponding norms.
The duality pairing between $V$ and $V^*$ is given by
\[\langle\phi,u\rangle := \langle\phi_1,u_1\rangle + \langle\phi_2,u_2\rangle.\]
Note that $V\subset H\subset V^*$ is a Gelfand triple.
It can be shown that the set-valued Nemytskii operator given by
\[N_f(u):=\{h\in H: h(x)\in f(x,u(x))\ \text{a.e.}\}\]
is a continuous mapping $N_f:H\rightarrow\mc{CBC}(H)$, which is also $l_f$-ROSL.
If, in addition, the mapping $(s_1,s_2)\mapsto f(x,s_1,s_2)$ is $L_f$-Lipschitz w.r.t.\ the Euclidean norm for all $x\in\Omega$,
then $N_f$ is $L_f$-Lipschitz as well.

We can rewrite \eqref{variational} as an operator inclusion
\begin{equation} \label{operator:inclusion}
0\in (\Delta u_1,\Delta u_2)+N_f(u_1,u_2).
\end{equation}
To comply with the notation in Section \ref{sec:Gelfand}, we denote
$F_V:=(\Delta,\Delta):V\rightarrow V^*$ and $F_H:=N_f:H\rightarrow\mc{CBC}(H)$
with one-sided Lipschitz constants $l_V=-1$ and $l_H=l_f$.
As the embedding $V\subset H$ is compact, the continuity of $N_f$ is, according to Remark \ref{sec:reform-gelf-tripl}(i),
sufficient for the application of Theorem \ref{solvability:hilbert}, provided that $l_f<1/c^2_{VH}$. We stress that, by definition of the embedding constant $c_{VH}$, the quantity $1/c^2_{VH}$ is simply the first Dirichlet eigenvalue of $-\Delta$ on $\Omega$. We have seen in the proof of Theorem \ref{W:theorem} that the norm
\[\|u\|_W^2 = \|u\|_V^2 -l_f\|u\|_H^2\quad\text{for}\ u\in V\]
captures the one-sided properties of the composed mapping $F_V+F_H:V\rightarrow V^*$ in an optimal way
in the sense that it is $l$-ROSL with $l:=-1$ w.r.t.\ $\|\cdot\|_W$.
We find that
\begin{align*}
&e_{W^*}(F_H(u),F_H(\tilde u)) 
\le \tfrac{1}{\sqrt{\tfrac{1}{c_{VH}^2}-l_H}}e_H(N_f(u),N_f(\tilde u))\\
&\le \tfrac{L_f}{\sqrt{\tfrac{1}{c_{VH}^2}-l_H}}\|u-\tilde u\|_H 
\le \tfrac{L_f}{\tfrac{1}{c_{VH}^2}-l_H}\|u-\tilde u\|_W
= \tfrac{c_{VH}^2L_f}{1-c_{VH}^2l_H}\|u-\tilde u\|_W
\end{align*}
for all $u,\tilde u\in V$.
In order to check the assumptions for the iterative algorithm \eqref{algorithm:formulation}, we distinguish two cases.\\
{\bf Case 1:}  $l_f\le 0$. In this case 
$F_V:(V,\|\cdot\|_W)\rightarrow(V^*,\|\cdot\|_{W^*})$ is $1$-Lipschitz, and thus $F_V+F_H:(V,\|\cdot\|_W)\rightarrow(V^*,\|\cdot\|_{W^*})$ is $L$-Lipschitz 
with $L:=1+\tfrac{c_{VH}^2L_f}{1-c_{VH}^2l_f}$.
Thus we can compute a solution of system \eqref{original:system} or, equivalently, operator inclusion
\eqref{operator:inclusion} by applying the iterative algorithm \eqref{algorithm:formulation},
provided that $L< -2l=2$, or, equivalently, 
\[L_f<\tfrac{1}{c_{VH}^2}-l_f.\] 
{\bf Case 2:}  $l_f \in [0,\frac{1}{c_{VH}^2})$. In this case it follows from the estimates in Remark~\ref{sec:reform-gelf-tripl}(ii) that $F_V:(V,\|\cdot\|_W)\rightarrow(V^*,\|\cdot\|_{W^*})$ is $\frac{1}{1-c_{VH}^2 l_f}$-Lipschitz, and thus $F_V+F_H:(V,\|\cdot\|_W)\rightarrow(V^*,\|\cdot\|_{W^*})$ is $\tfrac{1+ c_{VH}^2L_f}{1-c_{VH}^2l_f}$-Lipschitz.
As a consequence, the iterative algorithm \eqref{algorithm:formulation} applies in this case if
\[
L_f< \tfrac{1}{c_{VH}^2}-2 l_f,
\] 
which a posteriori requires $l_f < \frac{1}{3c_{VH}^2}$ since $l_f \le L_f$.

\subsection{Computational considerations}

We are looking for a solution $u\in V$ to the operator inclusion \eqref{operator:inclusion}.
Given any initial value $u_0\in V$, the numerical routine proposed in \eqref{algorithm:formulation} 
consecutively constructs a sequence $(u_n)_n\subset V$ of approximate solutions that converge to
a solution of \eqref{operator:inclusion}. 
In the present context, the iteration reads
\begin{align}
u_{n+1} &= u_n-\tfrac{1}{2l}J_W\Proj_{W^*}(0,F_V(u_n)+F_H(u_n)) \nonumber\\
&= u_n-\tfrac{1}{2l}\Proj_W(0,J_WF_V(u_n)+J_WN_f(u_n)),\label{eq:comp-cons-iteration}
\end{align}
because $J_W:(V^*,\|\cdot\|_{W^*})\rightarrow(V,\|\cdot\|_W)$ is an isometrical isomorphism.
From a computational perspective, it may be advantageous to recast the optimization problem
\begin{align*}
\|J_WF_V(u_n)+J_Wh\|_W = \min!\quad\text{subject to}\quad h\in N_f(u_n)
\end{align*}
with pointwise inequality constraints into an unconstrained dual problem.

\begin{lemma} \label{unique:minimizer}
Let $X$ be a Hilbert space with inner product $(\cdot,\cdot)_X$ and norm $\|\cdot\|_X$,
and let $A\in\mc{CBC}(X)$. 
Then the optimization problems
\begin{equation} \label{op}
\tfrac12\|x\|_X^2 = \min!\quad\text{subject to}\quad x\in A
\end{equation}
and 
\begin{equation}\label{dp}
\tfrac12\|x\|_X^2 + \sigma_X(-x,A) = \min!
\end{equation}
on the entire space $X$ possess the same unique solution.
\end{lemma}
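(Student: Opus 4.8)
The plan is to identify both minimizers explicitly with the metric projection $\bar x := \Proj_X(0,A)$, which is a single well-defined point by Lemma~\ref{projections}a) since $A$ is closed and convex. That $\bar x$ is the unique solution of \eqref{op} is immediate, since \eqref{op} asks precisely for the point of $A$ of minimal norm. It then remains to show that $\bar x$ is also the unique minimizer of the functional $g(x) := \tfrac12\|x\|_X^2 + \sigma_X(-x,A)$ over all of $X$; note that $g$ is finite-valued because $A$ is bounded, so that $\sigma_X(-x,A) = \sup_{y\in A}(y,-x)_X$ is finite for every $x$.

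The first key step is to record the variational characterization of the projection: $\bar x\in A$ and $(\bar x, y - \bar x)_X \ge 0$ for every $y\in A$, equivalently $(y,\bar x)_X \ge \|\bar x\|_X^2$ for all $y \in A$, with equality for $y = \bar x$. This yields $\inf_{y\in A}(y,\bar x)_X = \|\bar x\|_X^2$, hence $\sigma_X(-\bar x, A) = -\inf_{y\in A}(y,\bar x)_X = -\|\bar x\|_X^2$, and therefore $g(\bar x) = \tfrac12\|\bar x\|_X^2 - \|\bar x\|_X^2 = -\tfrac12\|\bar x\|_X^2$.

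The second key step is a lower bound for $g$ at an arbitrary $x\in X$. Using only that $\bar x\in A$, one estimates $\sigma_X(-x,A) = \sup_{y\in A}(y,-x)_X \ge (\bar x,-x)_X = -(x,\bar x)_X$, so that
\[
g(x) \ge \tfrac12\|x\|_X^2 - (x,\bar x)_X = \tfrac12\|x-\bar x\|_X^2 - \tfrac12\|\bar x\|_X^2 \ge -\tfrac12\|\bar x\|_X^2 = g(\bar x).
\]
Thus $\bar x$ is a global minimizer of $g$, and if $g(x)=g(\bar x)$ then equality holds throughout this chain, so in particular $\tfrac12\|x-\bar x\|_X^2 = 0$, forcing $x=\bar x$. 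Hence \eqref{dp} has the same unique solution $\bar x$ as \eqref{op}, which is the assertion.

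I do not expect a genuine obstacle here. The only point requiring a moment's thought is recognizing that the unconstrained functional $g$ attains its minimum exactly at the projection — essentially the Moreau/Fenchel identity $\min_X g = -\tfrac12\|\Proj_X(0,A)\|_X^2$ — together with the observation that boundedness of $A$ is what makes $g$ finite everywhere; no strict convexity, coercivity, compactness or separability argument is needed, since existence and uniqueness of the minimizer of $g$ fall out directly from the displayed estimate.
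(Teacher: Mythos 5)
Your proof is correct, and it takes a genuinely different route from the paper. The paper's argument is an indirect duality-style one: it invokes existence of a minimizer of \eqref{dp} via convexity, lower semicontinuity and coercivity, then applies the Fermat rule $0\in x_*-\partial\sigma_X(-x_*,A)$ and the identification $\partial\sigma_X(-x_*,A)=\argmax_{a\in A}(-x_*,a)_X\subset A$ to conclude that any minimizer of \eqref{dp} lies in $A$ and satisfies the variational inequality characterizing the projection. You instead guess the answer $\bar x=\Proj_X(0,A)$ up front and verify directly that it is the unique global minimizer of $g$, using only the projection's variational inequality to compute $g(\bar x)=-\tfrac12\|\bar x\|_X^2$ and the trivial bound $\sigma_X(-x,A)\ge(\bar x,-x)_X$ to get the strong-convexity estimate $g(x)\ge g(\bar x)+\tfrac12\|x-\bar x\|_X^2$. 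Your version is more elementary and self-contained: it delivers existence and uniqueness for \eqref{dp} in one displayed line, sidesteps the subdifferential calculus (in particular the need to know that the sup defining $\sigma_X(\cdot,A)$ is attained, which the paper implicitly uses via weak compactness of $A$), and even yields the quantitative Moreau-type identity $\min g=-\tfrac12\|\Proj_X(0,A)\|_X^2$ as a bonus. What the paper's approach buys is generality of method — the Fermat-rule argument extends to situations where the minimizer cannot be guessed in closed form — but for this particular statement your direct verification is cleaner.
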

\begin{proof}
It is well-known (see, e.g., \cite[Proposition 7.4]{Clarke:13}) that \eqref{op} possesses a unique
solution $x^*\in A$, which is also the unique solution of the variational inequality 
\begin{equation}\label{vip}
(x,a-x)_X \ge 0\quad\forall a\in A
\end{equation}
in $A$. 
As the function $x \mapsto \tfrac12\|x\|_X^2 + \sigma_X(-x,A)$ is convex, lower semicontinuous and coercive,
problem \eqref{dp} possesses at least one solution.
Any solution $x_*\in X$ of \eqref{dp} satisfies the necessary optimality condition
\[0\in x_*-\partial\sigma_X(-x_*,A),\]
where $\partial\sigma_X(\cdot,\cdot)$ denotes the subdifferential w.r.t.\ the first variable.
This implies
\begin{equation}\label{local:4}
x_*\in\partial\sigma_X(-x_*,A)=\argmax_{a\in A}(-x_*,a)_X\subset A,
\end{equation}
so that $x_*\in A$.
Moreover, \eqref{local:4} implies 
\[(-x_*,x_*)_X \ge (-x_*,a)_X\quad\text{for all}\ a\in A,\]
so that $x_*$ solves \eqref{vip} and hence $x_*=x^*$.
\end{proof}

In the situation of our example, the support function in the dual problem \eqref{dp}
can be computed explicitly.
\begin{lemma} \label{Nemytskii:representation}
For all $u,v\in V$ we have
\[\sigma_W(v,J_WN_f(u)) = \int_\Omega\sigma(v(x),f(x,u(x))dx,\]
where $\sigma(\cdot,\cdot)$ denotes the support function on $\R^2$.
\end{lemma}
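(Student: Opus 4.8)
The plan is to unwind the definitions so that the claimed identity becomes an interchange of a supremum and an integral, and then to prove the two inequalities separately. By definition of $J_W$, of $\|\cdot\|_W$ and of the support function, $\sigma_W(v,J_WN_f(u)) = \sup_{h\in N_f(u)}(J_Wh,v)_W = \sup_{h\in N_f(u)}\langle h,v\rangle$. Since $h\in N_f(u)\subset H$ and $v\in V$, the Gelfand-triple identification $\langle\cdot,\cdot\rangle=(\cdot,\cdot)_H$ on $H\times V$ gives $\langle h,v\rangle = (h,v)_H = \int_\Omega\langle h(x),v(x)\rangle\,dx$, where $\langle\cdot,\cdot\rangle$ on the right denotes the Euclidean product on $\R^2$. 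Thus it remains to show that
\[\sup_{h\in N_f(u)}\int_\Omega\langle h(x),v(x)\rangle\,dx = \int_\Omega\sigma(v(x),f(x,u(x)))\,dx.\]

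First I would check that the right-hand side is well defined. Since $u\colon\Omega\to\R^2$ is measurable and $f$ is Carathéodory by A1, the map $x\mapsto f(x,u(x))$ is a measurable multifunction with values in $\mc{CC}(\R^2)$; writing a Castaing representation of it, the function $x\mapsto\sigma(v(x),f(x,u(x)))$ is measurable as a countable supremum of measurable functions. By A3 and the Cauchy--Schwarz inequality in $\R^2$ we have $\sigma(v(x),f(x,u(x)))\le\|v(x)\|_2(\alpha(x)+\beta\|u(x)\|_2)$ for a.e.\ $x\in\Omega$, and the right-hand side belongs to $L^1(\Omega)$ because $v\in H\subset L^2$, $\alpha\in L^2$ and $u\in L^2$. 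The inequality "$\le$" is now immediate: for any $h\in N_f(u)$ one has $h(x)\in f(x,u(x))$ for a.e.\ $x$, hence $\langle h(x),v(x)\rangle\le\sigma(v(x),f(x,u(x)))$ a.e.; integrating and taking the supremum over $h\in N_f(u)$ yields the claim.

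For the reverse inequality I would produce a single $\bar h\in N_f(u)$ that attains the pointwise maximum almost everywhere. The set-valued map $x\mapsto\{\eta\in f(x,u(x)):\langle\eta,v(x)\rangle=\sigma(v(x),f(x,u(x)))\}$ has nonempty compact values, since the continuous linear functional $\eta\mapsto\langle\eta,v(x)\rangle$ attains its maximum on the compact set $f(x,u(x))$, and it is measurable, so a standard measurable selection theorem (via the Castaing representation or Filippov's lemma) provides a measurable selection $\bar h$. By A3, $\|\bar h(x)\|_2\le\alpha(x)+\beta\|u(x)\|_2$ a.e., hence $\bar h\in H$ and therefore $\bar h\in N_f(u)$; since $\langle\bar h(x),v(x)\rangle=\sigma(v(x),f(x,u(x)))$ a.e., integrating gives "$\ge$" and the lemma follows. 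The main obstacle is precisely this last step: one must argue carefully that $x\mapsto f(x,u(x))$ is a genuinely measurable multifunction and that the argmax multifunction above is measurable with nonempty values, so that the measurable maximizer $\bar h$ exists; the integrability furnished by A3 then automatically places $\bar h$ in $H$, and the rest is routine.
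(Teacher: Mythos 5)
Your proof is correct and follows essentially the same route as the paper: both arguments reduce the claim to interchanging the supremum with the integral and then realize the pointwise maximum by a measurable selection of the argmax multifunction $x\mapsto\{\eta\in f(x,u(x)):\langle\eta,v(x)\rangle=\sigma(v(x),f(x,u(x)))\}$, using the linear growth bound A3 to place that selection in $H$ and hence in $N_f(u)$. The paper cites the Aubin--Frankowska results on marginal maps and measurable selections where you invoke the Castaing representation, but this is the same toolbox, and your explicit split into the two inequalities is only an organizational difference.
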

\begin{proof}
Given $u,v\in V$, we construct some $h_v\in N_f(u)$ such that
\begin{equation} \label{maximizer}
(v,h_v)_H=\max_{h\in N_f(u)}(v,h)_H.
\end{equation}
Theorem 8.2.11 in \cite{Aubin:Frankowska:90} on marginal maps ensures that the multivalued mapping
$G:\Omega\rightarrow\mc{CBC}(\R^2)$ given by
\[G(x):=\{t\in f(x,u(x)):\langle v(x),t\rangle = \max_{s\in f(x,u(x))}\langle v(x),s\rangle\}\neq\emptyset\]
is measurable, and \cite[Theorem 8.1.3]{Aubin:Frankowska:90} ensures that $G$ possesses a measurable selection 
$h_v:\Omega\rightarrow\R^2$, i.e.
\[h_v(x)\in G(x)\subset f(x,u(x))\quad\text{for a.e.}\ x\in\Omega.\]
The linear growth bound A3) ensures that $h_v\in H$, and therefore $h_v\in N_f(u_n)$.
By monotonicity of the integral and the construction of $h_v$, we have
\[(v,h_v)_H - (v,h)_H = \int_\Omega\langle v(x),h_v(x)-h(x)\rangle dx \ge 0\]
for all $h\in N_f(u)$, so that $h_v$ satisfies condition \eqref{maximizer}. By construction of $h_v$, we find
\begin{align*}
\sigma_W(v,J_WN_f(u)) &= \sup_{h\in N_f(u)}(v,J_Wh)_W = \sup_{h\in N_f(u)}(v,h)_H\\
&= (v,h_v)_H = \int_\Omega\sigma(v(x),f(x,u(x))dx.
\end{align*}
\end{proof}


The following statement is a consequence of Lemmas \ref{unique:minimizer} and
\ref{Nemytskii:representation}.

\begin{corollary}
\label{comp-cons-corollary}  
In the iteration defined by (\ref{eq:comp-cons-iteration}), the function $-2l(u_{n+1}-u_n) \in V$ is the unique minimizer 
of the functional $I:V \to \R$ given by 
\begin{align*}
I(h) = &\frac{1}{2}\|h\|_W^2 - \langle F_V(u_n),h\rangle + \int_\Omega\sigma(-h(x),f(x,u_n(x))dx\\
=&  \int_{\Omega} \Bigl[\frac{1}{2} \sum_{i=1}^2 \Bigl(|\nabla h_i|^2 -l_f |h_i|^2
+2  \nabla h_i \cdot \nabla u_{n,i}\Bigr)+\sigma(-h(x),f(x,u_n(x))\Bigr]dx.
\end{align*}
\end{corollary}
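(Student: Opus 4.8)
The plan is to identify the iteration step with the variational problem \eqref{op}--\eqref{dp} applied to a suitable set $A$, and then unwind the definitions. First I would set $A := J_WF_V(u_n) + J_WN_f(u_n) \subset V$; by Lemma~\ref{CBC-composition} and Lemma~\ref{Minkowski} (applied in the Hilbert space $(V,(\cdot,\cdot)_W)$, noting $F_V(u_n)\in\mc{CBC}(V^*)$ is a single point and $N_f(u_n)\in\mc{CBC}(H)\subset\mc{CBC}(V^*)$) we have $A\in\mc{CBC}(V)$, so Lemma~\ref{unique:minimizer} applies with $X=(V,(\cdot,\cdot)_W)$. By \eqref{eq:comp-cons-iteration} the vector $-2l(u_{n+1}-u_n)$ equals $\Proj_W(0,A)$, which by \eqref{op} is exactly the unique minimizer over $A$ of $\tfrac12\|x\|_W^2$, and by Lemma~\ref{unique:minimizer} this coincides with the unique minimizer over all of $V$ of $x\mapsto \tfrac12\|x\|_W^2 + \sigma_W(-x,A)$.

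Next I would compute $\sigma_W(-x,A)$ for $x\in V$. Since $A$ is the Minkowski sum of the singleton $\{J_WF_V(u_n)\}$ and $J_WN_f(u_n)$, the support function splits: $\sigma_W(-x,A) = (J_WF_V(u_n),-x)_W + \sigma_W(-x,J_WN_f(u_n)) = -\langle F_V(u_n),x\rangle + \sigma_W(-x,J_WN_f(u_n))$, using the defining property $(J_W\phi,v)_W=\langle\phi,v\rangle$ of the isometry $J_W$. Applying Lemma~\ref{Nemytskii:representation} to the second term with $v=-x$ gives $\sigma_W(-x,J_WN_f(u_n)) = \int_\Omega\sigma(-x(t),f(t,u_n(t)))\,dt$. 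Renaming $x$ as $h$, this shows the functional whose unique minimizer is $-2l(u_{n+1}-u_n)$ is precisely $I(h) = \tfrac12\|h\|_W^2 - \langle F_V(u_n),h\rangle + \int_\Omega\sigma(-h(x),f(x,u_n(x)))\,dx$, which is the first displayed formula for $I$.

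Finally I would verify the second formula for $I$ by substituting the concrete definitions from Section~\ref{example:theory}. Writing $h=(h_1,h_2)$ and $u_n=(u_{n,1},u_{n,2})$, the definition $(u,v)_W^2$-form gives $\|h\|_W^2 = \|h\|_V^2 - l_f\|h\|_H^2 = \sum_{i=1}^2\int_\Omega(|\nabla h_i|^2 - l_f|h_i|^2)\,dx$, using $V=H^1_0(\Omega)^2$, $H=L^2(\Omega)^2$ and $l_V=-1$. Since $F_V=(\Delta,\Delta)$, integration by parts (valid for $h\in H^1_0(\Omega)^2$) yields $\langle F_V(u_n),h\rangle = \sum_{i=1}^2\langle\Delta u_{n,i},h_i\rangle = -\sum_{i=1}^2\int_\Omega\nabla u_{n,i}\cdot\nabla h_i\,dx$, so $-\langle F_V(u_n),h\rangle = \sum_{i=1}^2\int_\Omega\nabla h_i\cdot\nabla u_{n,i}\,dx$; absorbing the factor $\tfrac12$ in front of the quadratic part accounts for the coefficient $2$ on the cross term in the stated formula. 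Collecting the three contributions under one integral sign gives exactly the second displayed expression. The only mild subtlety — and the place I would be most careful — is confirming that Lemma~\ref{Minkowski} is applicable here: it is stated for $\mc{CBC}(X)$ with $X$ reflexive, and we need $J_WN_f(u_n)\in\mc{CBC}(V)$ together with closedness of the sum, which follows since $J_W$ is an isometric isomorphism of the Hilbert spaces $(V^*,\|\cdot\|_{W^*})$ and $(V,\|\cdot\|_W)$ and $N_f(u_n)\in\mc{CBC}(H)$ embeds into $\mc{CBC}(V^*)$ by Lemma~\ref{CBC-composition}; otherwise every step is a routine unwinding of definitions.
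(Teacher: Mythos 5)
Your argument is correct and is exactly the route the paper intends: the corollary is stated as a consequence of Lemmas \ref{unique:minimizer} and \ref{Nemytskii:representation}, and you apply Lemma \ref{unique:minimizer} in $(V,(\cdot,\cdot)_W)$ to the set $A=J_WF_V(u_n)+J_WN_f(u_n)$, split the support function over the singleton summand, and invoke Lemma \ref{Nemytskii:representation} for the Nemytskii part. The verification that $A\in\mc{CBC}(V)$ and the final integration by parts are handled correctly (indeed, since $J_WF_V(u_n)$ is a single point, a translation argument already suffices in place of Lemma \ref{Minkowski}).
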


\subsection{Some numerical results}

We first consider the problem
\begin{equation}\label{bsp:1:incl}
(-\Delta u_1,-\Delta u_2) \in -\frac49\cdot\frac{|u|^2}{1+|u|^2}u+l_fu + B_R(0) 
\end{equation}
on $\Omega=(0,1)$ with $u\equiv 0$ on $\partial\Omega$ and $l_f\le 0$.
Elementary computations show that the right-hand side is $l_f$-ROSL and $L_f$-Lipschitz with 
a constant $L_f=\tfrac12-l_f$.
According to Section \ref{example:theory} the composed mapping 
\[F_V+F_H:(V,\|\cdot\|_W)\rightarrow(V^*,\|\cdot\|_{W^*})\] 
is $l$-ROSL and $L$-Lipschitz with constants $l=-1$ and 
\[L=1+\tfrac{L_f}{1/c_{VH}^2-l_f}=1+\tfrac{1/2-l_f}{\pi^2-l_f}<2,\]
so that algorithm \eqref{algorithm:formulation} is applicable with a theoretical speed of convergence
\[\kappa := -L/2l = \tfrac12(1+\tfrac{1/2-l_f}{\pi^2-l_f})<1.\]
The results for parameters $l_f=-1$ and $R=10$ and initial values 
\[u_{01}(x)=\tfrac12\sin(2\pi x),\quad u_{02}=\tfrac12\sin(16\pi x)\] 
displayed in Figure \ref{bsp:1} show that the bound \eqref{dist:set}
is realistic in this case. 
The residual 
\[r_n=\dist_{W^*}(0,\Delta u_n+N_F(u_n))\] 
is approximately halved in every 
iteration, while the theoretical bound guarantees a reduction by a factor 
\[\kappa = \tfrac12 + \tfrac{3}{4(\pi^2+1)} \approx 0.569.\]

\begin{figure}
\begin{tabular}{cc}
\begin{minipage}{2.3cm}
\begin{footnotesize}
\begin{tabular}[b]{cc} 
    \toprule
    steps & residual \\ 
    \midrule
    0 & 17.506 \\ 1 & 8.8020\\ 2 & 4.4425\\ 3 & 2.2531\\ 4 & 1.1496\\ 5 & 0.5899\\ 6 & 0.3040\\ 7 & 0.1571\\ 8 & 0.0815 \\
    \bottomrule
\end{tabular}
\end{footnotesize}
\end{minipage}
&
\begin{minipage}{8cm}
\begin{scriptsize}
\psfrag{0}{0}
\psfrag{0.5}{0.5}
\psfrag{-0.5}{-0.5}
\psfrag{1}{1}
\psfrag{u0}{$u_0$}
\psfrag{u1}{$u_1$}
\psfrag{u2}{$u_2$}
\psfrag{u3}{$u_3$}
\psfrag{u4}{$u_4$}
\psfrag{u5}{$u_5$}
\psfrag{u6}{$u_6$}
\psfrag{u7}{$u_7$}
\psfrag{u8}{$u_8$}
\psfrag{O}{}
\includegraphics[scale=0.8]{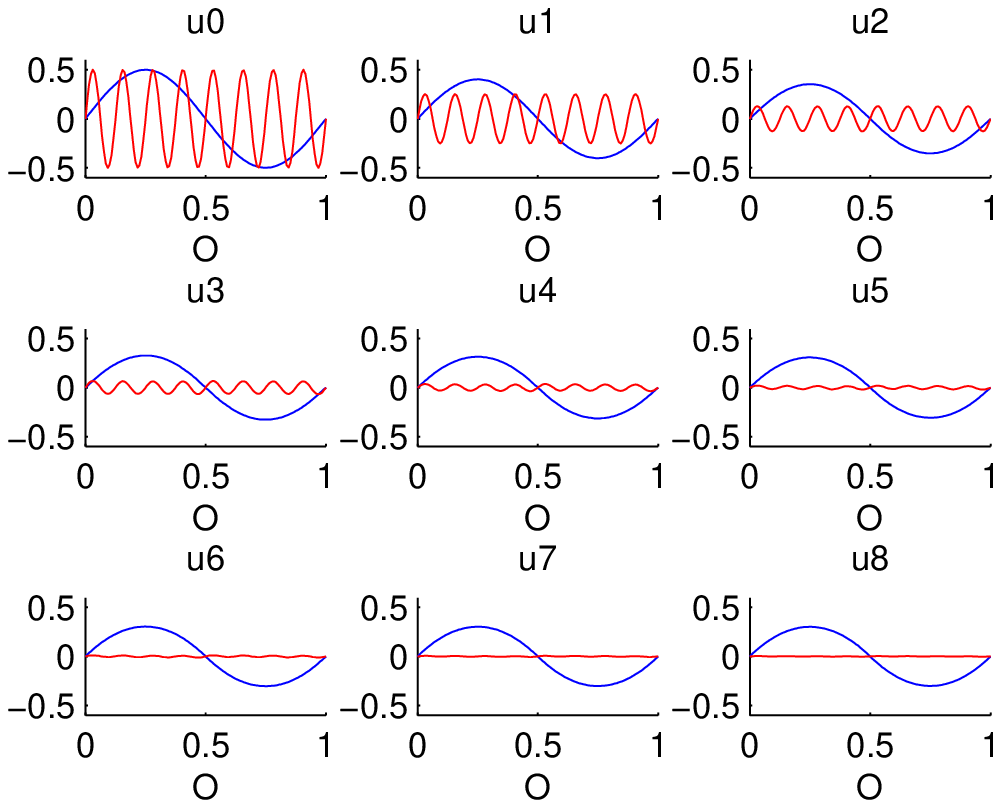}
\end{scriptsize}
\end{minipage}
\end{tabular}
\caption{Iterates of the numerical algorithm \eqref{algorithm:formulation} applied to inclusion \eqref{bsp:1:incl}.
The residual is measured in the norm $\|\cdot\|_{W^*}$. 
\label{bsp:1}}
\end{figure}

\medskip

We now consider the problem
\begin{equation}\label{bsp:2:incl}
(-\Delta u_1,-\Delta u_2) \in (-u_1u_2+1-u_1+x,-u_1u_2+1-u_2+x) + B_R(0)
\end{equation}
on $\Omega=(0,1)$ with $u\equiv 0$ on $\partial\Omega$. 
The application of algorithm \eqref{algorithm:formulation} to this problem  is a-priori not theoretically justified.
Examples not included here show that the iteration can indeed diverge.
For most nonnegative initial values of moderate magnitude, however, the iteration converges as depicted
in Figure \ref{bsp:2}, where $R=5$ and 
\[u_{01}(x)=x(1-x)e^{-\frac{(x-0.1)^2}{0.1}},\quad u_{02}(x)=x(1-x)e^{-\frac{(x-0.8)^2}{0.01}},\quad x\in[0,1].\]
In this case, the decay of the residuals justifies a-posteriori the use of the
method and the validity of the result.

\begin{figure}
\begin{tabular}{cc}
\begin{minipage}{2.3cm}
\begin{footnotesize}
\begin{tabular}[b]{cc} 
    \toprule
    steps & residual\\ 
    \midrule
    0 & 0.6516\\ 1 & 0.3375\\ 2 & 0.1802\\ 3 & 0.1000\\ 4 & 0.0584\\ 5 & 0.0363\\ 6 & 0.0242\\ 7 & 0.0171\\ 8 & 0.0127\\
    \bottomrule
\end{tabular}
\end{footnotesize}
\end{minipage}
&
\begin{minipage}{8cm}
\begin{scriptsize}
\psfrag{0}{0}
\psfrag{0.05}{}
\psfrag{0.1}{0.1}
\psfrag{0.15}{}
\psfrag{0.5}{0.5}
\psfrag{1}{1}
\psfrag{u0}{$u_0$}
\psfrag{u1}{$u_1$}
\psfrag{u2}{$u_2$}
\psfrag{u3}{$u_3$}
\psfrag{u4}{$u_4$}
\psfrag{u5}{$u_5$}
\psfrag{u6}{$u_6$}
\psfrag{u7}{$u_7$}
\psfrag{u8}{$u_8$}
\psfrag{O}{}
\includegraphics[scale=0.8]{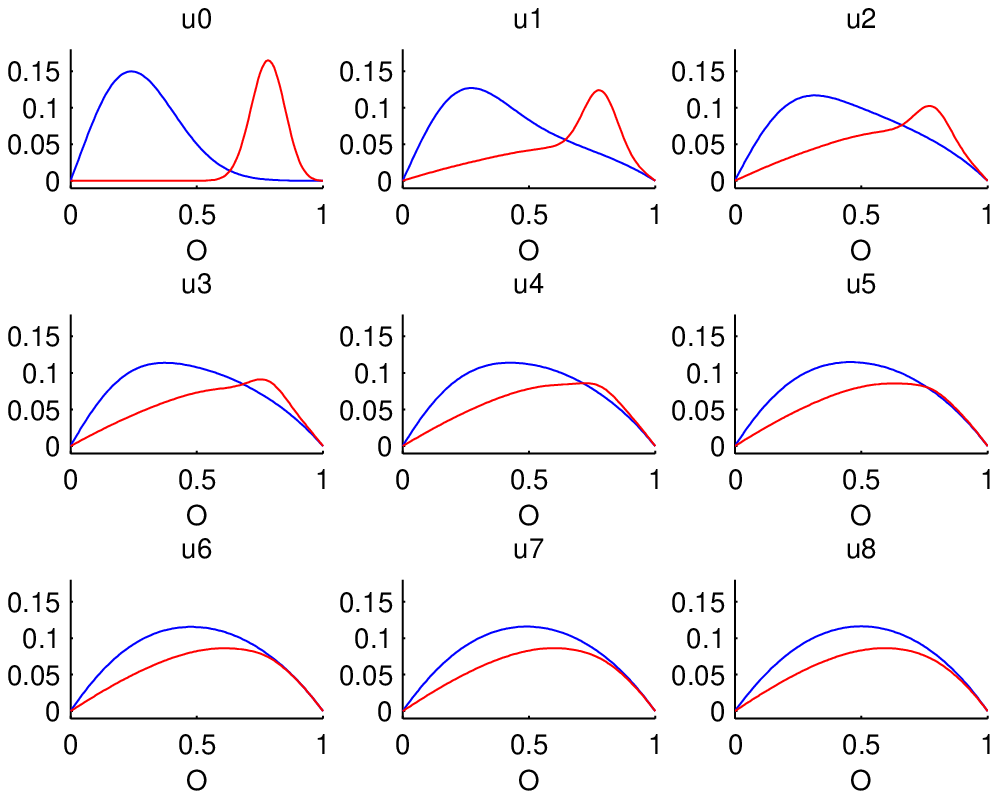}
\end{scriptsize}
\end{minipage}
\end{tabular}
\caption{Iterates of the numerical algorithm \eqref{algorithm:formulation} applied to inclusion \eqref{bsp:2:incl}.
The residual is measured in the norm $\|\cdot\|_{W^*}$. 
\label{bsp:2}}
\end{figure}

\bibliographystyle{plain}
\bibliography{standard}

\begin{thebibliography}{10}

\bibitem{Aubin:Frankowska:90}
J.-P. Aubin and H.~Frankowska.
\newblock {\em Set-Valued Analysis}.
\newblock Birk\-h\"au\-ser, Boston, 1990.

\bibitem{Baier:Farkhi:07}
R.~Baier and E.~Farkhi.
\newblock Regularity and integration of set-valued maps represented by
  generalized {S}teiner points.
\newblock {\em Set-Valued Anal.}, 15(2):185--207, 2007.

\bibitem{Beyn:Rieger:10}
W.-J. Beyn and J.~Rieger.
\newblock The implicit {E}uler scheme for one-sided {L}ipschitz differential
  inclusions.
\newblock {\em Discrete Contin.~Dyn.~Syst.~Ser.~B}, 14:409--428, 2010.

\bibitem{Beyn:Rieger:12}
W.-J. Beyn and J.~Rieger.
\newblock Galerkin finite element methods for semilinear elliptic differential
  inclusions.
\newblock {\em Discrete Contin.~Dyn.~Syst.~Ser.~B}, 18(2):295--312, 2013.

\bibitem{Beyn:Rieger:14}
W.-J. Beyn and J.~Rieger.
\newblock An iterative method for solving relaxed one-sided {L}ipschitz
  algebraic inclusions.
\newblock {\em J.~Optim.~Theory Appl.}, to appear.

\bibitem{Carl:Le:Motreanu:07}
S.~Carl, V.K. Le, and D.~Motreanu.
\newblock {\em Nonsmooth variational problems and their inequalities}.
\newblock Springer Monographs in Mathematics. Springer, New York, 2007.

\bibitem{Cheng:Cong:Xue:11}
Y.~Cheng, F.~Cong, and X.~Xue.
\newblock Boundary value problems of a class of nonlinear partial differential
  inclusions.
\newblock {\em Nonlinear Anal. Real World Appl.}, 12(6):3095--3102, 2011.

\bibitem{Clarke:13}
F.~Clarke.
\newblock {\em Functional Analysis, Calculus of Variations and Optimal
  Control}.
\newblock Graduate Texts in Mathematics 264, Springer, London, 2013.

\bibitem{Donchev:96}
T.~Donchev.
\newblock Qualitative properties of a class of differential inclusions.
\newblock {\em Glas.~Mat.~Ser.~III}, 31(51)(2):269--276, 1996.

\bibitem{Donchev:02}
T.~Donchev.
\newblock Properties of one sided {L}ipschitz multivalued maps.
\newblock {\em Nonlinear Anal.}, 49:13--20, 2002.

\bibitem{Donchev:04}
T.~Donchev.
\newblock Surjectivity and fixed points of relaxed dissipative multifunctions.
  differential inclusions approach.
\newblock {\em J.~Math.~Anal.~Appl.}, 299:525--533, 2004.

\bibitem{Kenderov:76}
P.~Kenderov.
\newblock Multivalued monotone mappings are almost everywhere single-valued.
\newblock {\em Studia Math.}, 56(3):199--203, 1976.

\bibitem{Mordukhovich:Tian:14}
B.S. Mordukhovich and Y.~Tian.
\newblock Implicit {E}uler approximation and optimization of one-sided
  {L}ipschitzian differential inclusions.
\newblock {\em CoRR}, http://arxiv.org/abs/1410.2207, 2014.

\bibitem{Rockafellar:70}
R.T. Rockafellar.
\newblock {\em Convex Analysis}, volume~28 of {\em Princeton mathematical
  series}.
\newblock Princeton University Press, 1970.

\bibitem{Rockafellar:76}
R.T. Rockafellar.
\newblock Monotone operators and the proximal point algorithm.
\newblock {\em SIAM J.~Control Optimization}, 14(5):877--898, 1976.

\end{thebibliography}

\end{document}